\newcommand{\R}{{\mathbb R}}
\newcommand{\Z}{{\mathbb Z}}
\newcommand{\N}{{\mathbb N}}
\newcommand{\eps}{\varepsilon}
\newcommand{\la}{\left\langle}
\newcommand{\ra}{\right\rangle}
\DeclareMathOperator{\supp}{supp}
\DeclareMathOperator{\Log}{Log}
\DeclareMathOperator{\Lip}{Lip}
\DeclareMathOperator{\spec}{spec}
\DeclareMathOperator{\real}{Re}
\title{Backward uniqueness for parabolic operators with non-Lipschitz coefficients}
\date{\today}
\author{Daniele Del Santo, Christian J\"ah and Marius Paicu}
\newtheorem{defi}{Definition}
\newtheorem{rem}{Remark}
\newtheorem{lem} {Lemma}
\newtheorem{prop}{Proposition}
\newtheorem{thm}{Theorem}
\newtheorem{cor}{Corollary}
\newtheorem{ex}{Example}
\begin{document}

\maketitle
  
\begin{abstract}
In this paper we study the backward uniqueness for parabolic equations with non-Lipschitz coefficients in time and space. 
The result presented here improves an old uniqueness theorem due to Lions and Malgrange [Math. Scand. {\bf 8} (1960), 277--286] and some more recent 
results of Del Santo and Prizzi [J. Math. Pures Appl. {\bf 84} (2005), 471--491; Ann. Mat. Pura Appl., to appear].
\\[0.3 cm]
{\bf 2010 Mathematics Subject Classification:} 35K15, 35R25.
\end{abstract}

\section{Introduction}

The question of uniqueness and non-uniqueness for solutions of partial differential equations has a fairly long history, starting form the classical works of Holmgren and Carleman. A good and rather complete survey about the results on this topic, until the early 1980's, can be found in the  book of Zuily \cite{Zuily}. 

In this paper we are interested in a particular class of parabolic operators for which we consider the uniqueness property, backwards in time. Uniqueness  for smooth solutions of parabolic and backward parabolic operators is not trivial. In \cite{Tychonov} Tychonoff showed that a solution $u \in C^\infty(\R_t \times \R^n_x)$ of the Cauchy problem \begin{equation} \label{CPT}
\left\{ \begin{array}{ll}
\partial_t u - \Delta_x  u = 0, & (t,x) \in \R_t \times \R^n_x \\
u(0,x) = 0 & x \in \R^n_x,
\end{array}
\right.
\end{equation} not necessarily vanishes.
In particular, the example given by Tychonoff is such that the solution $u(t,x)$ to (\ref{CPT}) satisfies 
\begin{equation}
\sup_{x\in {\mathbb R}_x^n}\; (\max_{t\in [-T,T]} |u(t,x)| e^{-a|x|^2} )=+\infty,
\label{exty}
\end{equation} 
for all $a>0$. On the other hand Tychonoff proved that  uniqueness to (\ref {CPT}) can be obtained, 
 for example, if  one imposes $ \max_{t\in [-T,T]} |u(t,x)| \leq  Ce^{a|x|^2}$, for some $C,\ a > 0$. 
 Other interesting examples of non-uniqueness for \eqref{CPT}, under particular assumptions, can e.g. be found in \cite{Rosenbloom}.

Here we  consider   the {\it backward parabolic} operator \begin{equation} \label{BW-Op}
Pu = \partial_t u + \sum\limits_{j,k=1}^n \partial_{x_j}(a_{jk}(t,x)\partial_{x_k}u) + c(t,x)u,
\end{equation} defined on the strip $[0,T] \times \R^n_x$; all the coefficients are supposed to be measurable and bounded; the 0-order coefficient $c(t,x)$  is allowed to be complex valued and we assume that the matrix $(a_{jk}(t,x))_{j,k=1}^n$ is real and symmetric for all $(t,x) \in [0,T] \times \R^n_x$ and that there exists an $a_0 \in (0,1]$ such that, for all $(t,x,\xi) \in [0,T] \times \R^n_x \times \R^n_\xi$, 
\begin{equation}
\label{elliptic}
\sum\limits_{j,k=1}^n a_{jk}(t,x)\xi_j\xi_k \geq a_0 |\xi|^2.
\end{equation} 

Under {\it uniqueness property in $\mathcal H$} we will mean the following: let $\mathcal H$ be a  space of functions (in which it makes sense to look for solutions $u$ of  the equation $Pu=0$). Then we say that the operator $P$ has the uniqueness property in $\mathcal H$ if, whenever $u \in \mathcal H$, $Pu=0$ on $[0,T]\times \R^n_x$ and $u(0,x)=0$ in $\R^n_x$, then $u \equiv 0$ in $[0,T] \times \R^n_x$.

In \cite{Lions1960} Lions and Malgrange  proved
the uniqueness property for \eqref{BW-Op} in the space 
\begin{equation}\label{defH}
\mathcal H := L^2([0,T], H^2(\R^n_x)) \cap H^1([0,T], L^2(\R^n_x)),
\end{equation} 
(note that this choice for $\mathcal H$ excludes the pathological situation of (\ref{exty}))
under the assumption that, for all $j,k = 1, \dots, n$,
\begin{equation*}
a_{jk}(t,x) \in \Lip([0,T],L^\infty(\R^n_x)).
\end{equation*}

An example of Miller in \cite{Miller1973} showed that the regularity of the coefficients $a_{jk}$ with respect to $t$ should be taken under consideration, if one wants to have uniqueness in $\mathcal H$. In particular he constructed a nontrivial solution to the  Cauchy problem for (\ref{BW-Op})  with $0$ initial data, for an operator having the coefficients $a_{jk}$ in  $C^\alpha([0,T],C_b^\infty(\R^n_x))$, for all $0<\alpha < \frac{1}{6}$. 

The example of Miller was considerably improved by Mandache in \cite{Mandache}, in the following way: consider a modulus of continuity $\mu$ which does not satisfy the Osgood condition, i.e.
\begin{equation*} 
\int\limits_0^1 \frac{1}{\mu(s)} ds< +\infty,
\end{equation*} 
then it is possible to  construct an operator of type (\ref{BW-Op}) having the regularity with respect to $t$ of the coefficients of the principal part ruled by $\mu$, such that this operator does not have the uniqueness property in $\mathcal H$.

In \cite{DSP} Del Santo and Prizzi proved uniqueness for \eqref{BW-Op} in $\mathcal H$, under the condition that, for all $ j,k = 1, \dots, n$,
\begin{equation*}
a_{jk}(t,x) \in C^\mu([0,T], L^\infty(\R^n_x)) \cap L^\infty([0,T],C^2(\R^n_x)), 
\end{equation*} 
and with the modulus of continuity $\mu$ satisfying  the Osgood condition 
\begin{equation} 
\label{Osgoodintr}
\int\limits_0^1 \frac{1}{\mu(s)} ds= +\infty.
\end{equation} 

If the result in \cite{DSP} was completely satisfactory from the point of view of the regularity with respect to $t$, the same cannot be said for the regularity with respect to the space variables:  the $C^2$ regularity with respect to $x$ was a consequence of a difficulty in obtaining the Carleman estimate from which the uniqueness was deduced. 

In \cite{DS2012} Del Santo made the technique used in \cite{DSP} more effective by using a theorem of Coifman and Meyer (\cite[Th. 35]{CM1978}, see also \cite[Par. 3.6]{Tay}) and he could lower the regularity assumption in $x$ from $C^2$ to $C^{1+\eps}$ for an arbitrary small $\eps>0$. 

Refining this approach Del Santo and Prizzi got in \cite{DSP2012} the uniqueness property for \eqref{BW-Op} with the  coefficients of the principal part\begin{equation*}
a_{jk} \in C^\mu([0,T],L^\infty(\R_x^n)) \cap L^\infty([0,T],\Lip(\R_x^n)).
\end{equation*}

In the present  paper we will lower the regularity assumption for the coefficients of the principal part with respect to the space variables, going beyond the Lipschitz-continuity. The regularity with respect to $x$ will be controlled by a modulus of continuity linked to the Osgood modulus of continuity with respect to $t$. More precisely we will prove that the uniqueness property in $\cal H$ for \eqref{BW-Op} holds for principal part coefficients 
\begin{equation*}
a_{jk} \in C^\mu([0,T],L^\infty(\R_x^n)) \cap L^\infty([0,T],C^\omega(\R_x^n)),
\end{equation*} 
where $\mu$ satisfies \eqref{Osgoodintr} and $\omega(s) = \sqrt{\mu(s^2)}$. 
The proof of this uniqueness result will use the Littlewood-Paley theory and the Bony's paraproduct and will be obtained exploiting a Carleman estimate. The Carleman estimate will be proved in $H^{-s}$ with $s \in (0,1)$ while the weight function in the Carleman estimate will be the same as that in \cite{Tarama}. 

The paper is organized as follows. First we state the  uniqueness results and we give some remarks. Then we introduce the Littlewood-Paley theory and Bony's paraproduct. These tools are used in obtaining some estimates, presented in Subsection \ref{remainderest}. Finally, Section \ref{SecCarleman}  is devoted to the proof of the Carleman estimate needed to deduce our uniqueness theorem.

\section{The uniqueness result}
\label{par1}
\begin{defi}
A continuous function $\mu: [0,1] \rightarrow \R$ is called \textit{modulus of continuity} if it is strictly increasing, concave and  satisfies $\mu(0)=0$.
 \end{defi}

\begin{rem}
The concavity of the modulus of continuity has a list of simple consequences: for all $s \in [0,1]$ we have $\mu(s) \geq \mu(1)s$,
the function $s \mapsto \mu(s)/s$ is decreasing on $(0,1]$, the limit $\lim_{s \rightarrow 0+} \mu(s)/s$ exists,
the function $\sigma \mapsto \mu(1/\sigma)/(1/\sigma)$ is increasing on $[1,+\infty)$
and the function $\sigma \mapsto 1/(\sigma^2 \mu(1/\sigma))$ is decreasing on $[1,+\infty)$.
Moreover, there exists a constant $C>0$ such that 
\begin{equation}
\mu(2s)\leq C\mu(s).
\label{duplicate}
\end{equation}
\end{rem}

\begin{defi}
Let $\Omega$ be a  convex set  in  $\R^n$ and $f : \Omega \rightarrow \mathcal B$, where $\mathcal B$ is a Banach space. We will say that $f$ belongs to $C^\mu(\Omega,\mathcal B)$ if $f$ is bounded and it satisfies 
\begin{equation*}
\sup\limits_{\substack{0<|t-s|<1 \\t,s \in \Omega}} \frac{\|f(t)-f(s)\|_{\mathcal B}}{\mu(|t-s|)} < +\infty.
\end{equation*} 
For $f\in C^\mu(\Omega,\mathcal B)$ we set 
$$
\|f\|_{C^\mu(\Omega,\mathcal B)}=\|f\|_{L^\infty(\Omega,\mathcal B)}+ \sup\limits_{\substack{0<|t-s|<1 \\t,s \in \Omega}} \frac{\|f(t)-f(s)\|_{\mathcal B}}{\mu(|t-s|)}.
$$
In case of no ambiguity we will omit the space $\mathcal B$ from the notation.
\end{defi}

\begin{defi} 
We will say that a  modulus of continuity $\mu$ satisfies the Osgood condition if
\begin{equation} \label{Osgood}
\int\limits_0^1 \frac{1}{\mu(s)} ds= +\infty.
\end{equation}
 \end{defi}

\begin{ex} A simple example of a modulus of continuity is $\mu(s) = s^\alpha$, for $\alpha\in (0,1]$. If $\alpha \in (0,1)$ (H\"older-continuity) $\mu$ does not satisfies the Osgood condition, while if $\alpha=1$ (Lipschitz-continuity) $\mu$ satisfies the Osgood condition. Similarly  $\mu(s) = s(1+ |\log(s)|)^\alpha$, for  $\alpha > 0$, ($\Log^\alpha$-Lipschitz-continuity) satisfies the Osgood condition if and only if $\alpha \leq 1$. \end{ex}

Now we state our main uniqueness result.

\begin{thm} \label{Uniqueness} Let $\mu$ and $\omega$ be two moduli of continuity such that $\omega(s)= \sqrt{\mu(s^2)}$.
Suppose that $\mu$ satisfies the Osgood condition. Suppose moreover that 
\begin{itemize}
\item []there exists a constant $C>0$ such that
 \begin{equation} 
 \label{Dini1}
\int\limits_0^h \frac{\omega(t)}{t} dt \leq C \omega(h);
\end{equation} 
\item []there exists a constant $C>0$ such that, for all $1\leq p\leq q-1$,
\begin{equation} \label{TechCond1}
\frac{\omega(2^{-q})}{\omega(2^{-p})} \leq C \omega(2^{p-q});
\end{equation} 
for all $s\in (0,1)$,
\begin{equation}
\label{TechCond2}
\sum_{k= 0}^{+\infty} 2^{(1-s)k}\omega (2^{-k})<+\infty.
\end{equation}
\end{itemize}
Assume that, for all $j,k = 1, \dots, n$, 
\begin{equation*}
a_{jk} \in C^\mu([0,T],L^\infty(\R_x^n)) \cap L^\infty([0,T],C^\omega(\R_x^n)). 
\end{equation*}

Then the operator $P$ has the uniqueness property in $\mathcal H$, where $P$ and $\mathcal H$ are defined in \eqref{BW-Op} and \eqref{defH} respectively.
\end{thm}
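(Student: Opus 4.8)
The plan is to reduce the uniqueness statement to a Carleman estimate and then prove that estimate via Littlewood--Paley decomposition and Bony's paraproduct. First I would set up the standard reduction: given $u\in\mathcal H$ with $Pu=0$ and $u(0,\cdot)=0$, one extends $u$ by $0$ to $t<0$ and, after a logarithmic change of time variable and a suitable localization, reduces matters to showing that a solution vanishing on a half-strip must vanish identically on a slightly larger strip; this is the usual Carleman-estimate mechanism (as in Lions--Malgrange, Del Santo--Prizzi, Tarama). The goal is an inequality of the shape
\begin{equation*}
\gamma\,\|e^{\gamma\varphi(t)}v\|_{L^2(H^{-s})}^2 \;\leq\; C\,\|e^{\gamma\varphi(t)}Pv\|_{L^2(H^{-s})}^2
\end{equation*}
for $v$ supported in time near the endpoint, all $\gamma$ large, with $\varphi$ the weight from \cite{Tarama} (built from the Osgood function $\mu$) and $s\in(0,1)$ to be chosen. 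Once such an estimate holds, the uniqueness property in $\mathcal H$ follows by the by-now-classical convexity/continuation argument.

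The core of the work is the Carleman estimate itself. Here I would conjugate $P$ by the exponential weight, writing $P_\gamma = e^{\gamma\varphi}Pe^{-\gamma\varphi}$, and split the principal part into its Littlewood--Paley pieces $\Delta_k$. The non-Lipschitz coefficients $a_{jk}(t,x)$ are handled by Bony's paraproduct: one replaces $a_{jk}\partial_{x_k}u$ by the paraproduct $T_{a_{jk}}\partial_{x_k}u$ plus a remainder, and then, following the idea going back to the treatment of non-Lipschitz wave equations (Colombini--Lerner, and in this parabolic setting Del Santo and Tarama), one further smooths the coefficient in $x$ at frequency-dependent scale, approximating $a_{jk}$ at frequency $2^k$ by a mollification $a_{jk}^{(k)}$ with mollification parameter tuned to $2^{-k}$. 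The errors produced by this approximation are controlled precisely by the modulus $\omega$ via $\|a_{jk}-a_{jk}^{(k)}\|_{L^\infty}\lesssim\omega(2^{-k})$ and $\|\nabla_x a_{jk}^{(k)}\|_{L^\infty}\lesssim \omega(2^{-k})2^k$; the relation $\omega(s)=\sqrt{\mu(s^2)}$ is exactly what makes the spatial losses of size $\omega(2^{-k})2^k$ match the temporal gain coming from the Osgood weight (which scales like $\mu$ evaluated at the dual time scale). Hypotheses \eqref{Dini1}, \eqref{TechCond1} and \eqref{TechCond2} are the technical inequalities that let one sum the dyadic error estimates: \eqref{TechCond2} guarantees that the remainder terms, which carry a factor $2^{(1-s)k}\omega(2^{-k})$ because we work in $H^{-s}$ rather than $L^2$, form a convergent series and hence are absorbable; \eqref{Dini1} (a Dini-type condition) controls the accumulation of the low-frequency interactions; and \eqref{TechCond1} governs the paraproduct cross terms between frequencies $2^p$ and $2^q$.

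Concretely the steps are: (i) perform the reduction to a localized Carleman inequality; (ii) choose the weight $\varphi$ from \cite{Tarama} and record its derivative estimates in terms of $\mu$; (iii) decompose $e^{\gamma\varphi}Pe^{-\gamma\varphi}v = \sum_k \Delta_k$-pieces and introduce the frequency-localized mollified coefficients; (iv) on each dyadic block, perform the energy computation — integrate by parts using the ellipticity \eqref{elliptic} of the smoothed matrix to get a positive term of size $\gamma\mu(\cdot)\|\Delta_k v\|^2$, and estimate the commutator and paraproduct-remainder terms using Subsection~\ref{remainderest}; (v) sum over $k$, using \eqref{Dini1}, \eqref{TechCond1}, \eqref{TechCond2} to absorb all errors into the main term for $\gamma$ large; (vi) deduce uniqueness in $\mathcal H$ from the resulting estimate. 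The main obstacle, and the heart of the paper, is step (iv)--(v): making the bookkeeping of the dyadic error terms precise enough that the spatial roughness $\omega$ is exactly compensated — neither too much loss (which would break summability even with \eqref{TechCond2}) nor a mismatch with the time-weight scaling — and in particular checking that working in the negative-order space $H^{-s}$, which is forced on us by the low regularity, does not destroy the positivity of the leading term while still allowing \eqref{TechCond2} to close the estimate.
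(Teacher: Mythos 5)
Your proposal follows essentially the same route as the paper: a Carleman estimate in $H^{-s}$ with the Tarama/Osgood weight, proved via Littlewood--Paley decomposition, Bony's paraproduct, and frequency-dependent regularization of the coefficients, with \eqref{Dini1}, \eqref{TechCond1}, \eqref{TechCond2} controlling the dyadic error sums (Lemma \ref{lemma}) and the relation $\omega(s)=\sqrt{\mu(s^2)}$ balancing spatial and temporal losses via $2^{2q}\mu(2^{-2q})=\Omega^2(q)$. One small imprecision to flag: the paper actually uses two distinct frequency-dependent regularizations --- the spatial truncation $S_{q-1}a_{jk}$ at scale $2^{-q}$ already built into the paraproduct decomposition \eqref{Decomposition}, and a separate Friedrichs mollification \emph{in time} at the finer scale $\eps=2^{-2q}$ (not $2^{-q}$) to enable the integration by parts in $t$ --- and the place where the Osgood hypothesis is really exploited is the case split on whether $\Phi'(\gamma(T-t))$ dominates $2^{2q}$, using the ODE $\Phi''=(\Phi')^2\mu(1/\Phi')$ from \eqref{ODE}.
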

\begin{rem} We don't know at the present stage whether the conditions \eqref{TechCond1}  and \eqref{TechCond2} are purely technical or can be removed. They are necessary to the proof of some auxiliary remainder estimates (see  Section \ref{remainderest}, Lemma \ref{lemma}). Let us remark that \eqref{TechCond2} is implied by the following: for all $\sigma\in (0,1)$, there exists $\delta_{\sigma}\in (0,1)$ and $c,\ C>0$ such that, for all $s\in [0,\delta_{\sigma}]$, we have $cs\leq \omega(s)\leq C s^{\sigma}$. \end{rem}


\begin{rem} It would be possible to prove uniqueness for an operator with terms of order one, i.e. for \begin{equation*}
\tilde P = \partial_t + \sum\limits_{j,k=1}^n \partial_{x_j}(a_{jk}(t,x)\partial_{x_k}) + \sum\limits_{k=1}^n b_k(t,x)\partial_{x_k} + c(t,x),
\end{equation*} assuming that $b_k(t,x)$ are $L^\infty([0,T], C^\sigma(\R_x^n))$ for some $\sigma >0$. This is due to the fact that the Carleman estimate, which we are able to prove, is in $H^{-s}$ with $s \in (0,1)$. In \cite{DSP} and  \cite{DS2012} the Carleman estimate was proved in $L^2$ and this fact allowed to consider the coefficients $b_k(t,x)$ under no hypotheses on $b_k(t,x)$, apart boundedness. \end{rem}

\begin{ex} 
A simple example of moduli of continuity $\mu$ and $\omega$ satisfying the hypotheses of Theorem \ref{Uniqueness} is $\mu(s)= s(1+|\log(s)|)$ and $\omega(s) = s\sqrt{1+|\log(s)|}$.  
\end{ex}

\section{Littlewood-Paley theory  and Bony's paraproduct}

In this section we recall some well-known results of the  Littlewood-Paley theory and Bony's paraproduct. These results will be  fundamental tools in the  proof of our Carleman estimate. 

\subsection{Littlewood-Paley theory}

Let $\chi$ and $\varphi$ be two functions in $C^\infty_0(\R^n_\xi)$, with values in $[0,1]$, such that \begin{eqnarray}
\nonumber && \supp(\varphi) \subseteq \{\xi \in \R_\xi^n : \frac{3}{4} \leq |\xi| \leq \frac{8}{3} \} ,\\
\label{ProjBall} && \supp(\chi) \subseteq \{\xi \in \R_\xi^n : |\xi| \leq \frac{4}{3}\}.
\end{eqnarray} 
Let, for all $\xi \in \R^n_\xi$,
 \begin{equation*}
\chi(\xi) + \sum\limits_{q\geq 0} \varphi(2^{-q}\xi) = 1,
\end{equation*} 
i.e. $\varphi(\xi) = \chi(\frac{\xi}{2})-\chi(\xi)$. By these choices we have \begin{equation*}
\supp(\chi(2^{-q}\cdot)) \subseteq \{\xi \in \R^n_\xi : |\xi| \leq \frac{4}{3} 2^q \}
\end{equation*} and therefore \begin{equation*}
\supp(\varphi(2^{-q}\cdot)) \subseteq \{\xi \in \R^n_\xi : \frac{3}{4}2^{q} \leq |\xi| \leq \frac{8}{3} 2^q \}.
\end{equation*} 
We get 
\begin{equation} \label{lpfond2}
\supp(\varphi(2^{-q}\cdot)) \cap \supp(\varphi(2^{-p}\cdot)) = \emptyset, \quad {\text {for all}}\quad  |p-q| \geq 2.
\end{equation} 
With this preparations, we define the Littlewood-Paley decomposition. Let us denote
by $\mathcal F$ the Fourier transform on $\R^n$ and by $\mathcal F^{-1}$ its inverse. Let $\Delta_q$ and $S_q$, for $q \in \Z$, be defined as follows: \begin{eqnarray*} \label{defnotationdyadique}
&& \Delta_q u := 0 \quad \text{if} \,\, q \leq -2, \\[0.2 cm]
&& \Delta_{-1} u := \chi(D_x)u = \mathcal F^{-1}(\chi(\cdot) \mathcal F(u)(\cdot)), \\[0.2 cm]
&& \Delta_q u := \varphi(2^{-q}D_x)u = \mathcal F^{-1}(\varphi(2^{-q}\cdot)\mathcal F(u)(\cdot)), \quad q \geq 0
\end{eqnarray*} 
and \begin
{equation*}
S_q u = \chi(2^{-q}D_x)u = \mathcal F^{-1}(\chi(2^{-q}\cdot)\mathcal F(u)(\cdot)) = \sum\limits_{p \leq q-1} \Delta_p u, \quad q \geq 0.
\end{equation*} 
Furthermore we denote 
\begin{equation*}
\spec (u) := \supp(\mathcal{F}(u)).
\end{equation*} 
For $u \in \mathcal S'(\R^n_x)$, 
\begin{equation*}
u = \sum\limits_{q \geq -1} \Delta_q u
\end{equation*} in the sense of $\mathcal S'(\R^n_x)$.

The following two propositions describe the decomposition and synthesis of the classical Sobolev spaces $H^s$, via Littlewood-Paley decomposition. A proof of these two propositions can be found in \cite[Prop. 4.1.11 and Prop. 4.1.12]{Metivier}.

\begin{prop} \label{SobolevLP} Let $s \in \R$. Then a tempered distribution $u \in \mathcal S'(\R^n_x)$ belongs to $H^s(\R^n_x)$ if and only if the following two conditions hold: \begin{enumerate}[(i)]
\item for all $q \geq -1$, $\Delta_q u \in L^2(\R^n_x)$,
\item the sequence $(\delta_q)_{q \in \Z_{\geq -1}}$, where $\delta_q := 2^{qs}\|\Delta_q u\|_{L^2(\R^n_x)}$, belongs to $l^2(\Z_{\geq -1})$.
\end{enumerate} Moreover, there exists $C_s \geq 1$ such that, for all $u \in H^s(\R^n_x)$, we have \begin{equation*}
\frac{1}{C_s}\|u\|_{H^s(\R^n_x)} \leq \| (\delta_q )\|_{l^2(\Z_{\geq -1})} \leq C_s \|u\|_{H^s(\R^n_x)}.
\end{equation*}
\end{prop}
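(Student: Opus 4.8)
\textbf{Proof strategy for Proposition \ref{SobolevLP}.}

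The plan is to prove the two-sided equivalence between membership in $H^s(\R^n_x)$ and the $l^2$-summability of the rescaled dyadic blocks $\delta_q = 2^{qs}\|\Delta_q u\|_{L^2}$. The key analytic fact underpinning everything is that for $\xi$ in the support of $\varphi(2^{-q}\cdot)$ (namely $\frac34 2^q \le |\xi| \le \frac83 2^q$) the quantity $(1+|\xi|^2)^{s/2}$ is comparable to $2^{qs}$: there are constants $0 < c_s \le C_s$, depending only on $s$ and the fixed value $\tfrac34, \tfrac83$, such that $c_s 2^{qs} \le (1+|\xi|^2)^{s/2} \le C_s 2^{qs}$ for all $q \ge 0$ and all such $\xi$; and similarly $(1+|\xi|^2)^{s/2}$ is bounded above and below by absolute constants on $\supp\chi$, so the block $q=-1$ is harmless. (For $s < 0$ the roles of upper and lower bounds swap, but the comparability is unchanged; one must be slightly careful and use $2^q \le |\xi|$ or $|\xi| \le 2^{q+2}$ as appropriate.)

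First I would establish the easy direction: if $u \in H^s$, then since $\widehat{\Delta_q u}(\xi) = \varphi(2^{-q}\xi)\widehat u(\xi)$ is a pointwise product with a bounded function, $\Delta_q u \in L^2$ for every $q$ by Plancherel, giving (i). For (ii), Plancherel again gives $\|\Delta_q u\|_{L^2}^2 = \int |\varphi(2^{-q}\xi)|^2 |\widehat u(\xi)|^2 d\xi$; multiplying by $2^{2qs}$ and using the comparability above on the support, $2^{2qs}\|\Delta_q u\|_{L^2}^2 \le C_s^2 \int_{A_q} (1+|\xi|^2)^s |\widehat u(\xi)|^2 d\xi$ where $A_q$ is the dyadic annulus. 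Summing over $q \ge -1$ and using that the annuli $A_q$ have bounded overlap (each $\xi$ lies in at most a fixed finite number of them, essentially by \eqref{lpfond2} and the analogous statement for $\chi$), the sum is bounded by a constant times $\int (1+|\xi|^2)^s |\widehat u(\xi)|^2 d\xi = \|u\|_{H^s}^2$. This gives $\|(\delta_q)\|_{l^2} \le C_s \|u\|_{H^s}$.

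For the converse, suppose (i) and (ii) hold. From $u = \sum_{q \ge -1}\Delta_q u$ in $\mathcal S'$, I would show the series converges in $H^s$. Using the bounded-overlap property of the frequency supports and the (quasi-)orthogonality it yields, $\|\sum_{q} \Delta_q u\|_{H^s}^2 \le C \sum_q \|\Delta_q u\|_{H^s}^2$; and on each block, $\|\Delta_q u\|_{H^s}^2 = \int_{A_q}(1+|\xi|^2)^s|\widehat{\Delta_q u}(\xi)|^2 d\xi \le C_s^2 2^{2qs}\|\Delta_q u\|_{L^2}^2 = C_s^2 \delta_q^2$, which is summable by hypothesis. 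Hence the partial sums form a Cauchy sequence in $H^s$, their limit coincides with $u$ (the limit in $H^s \hookrightarrow \mathcal S'$ must agree with the $\mathcal S'$-limit), so $u \in H^s$ with $\|u\|_{H^s} \le C_s \|(\delta_q)\|_{l^2}$. Combining the two inequalities and adjusting the constant yields the stated double inequality with a single $C_s \ge 1$.

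The main obstacle, such as it is, is bookkeeping rather than depth: one must track the constants carefully through the sign of $s$ (so that the comparability $(1+|\xi|^2)^{s/2} \sim 2^{qs}$ is applied in the correct direction), and one must make precise the ``bounded overlap'' / quasi-orthogonality statement — that $\langle \Delta_p u, \Delta_q u\rangle_{H^s} = 0$ once $|p-q| \ge 2$ by \eqref{lpfond2}, so that cross terms in $\|\sum \Delta_q u\|_{H^s}^2$ reduce to a finite band around the diagonal. Since this is a standard result, I would simply cite \cite[Prop. 4.1.11 and Prop. 4.1.12]{Metivier} for the details, as the paper already does, and treat the above as the guiding sketch.
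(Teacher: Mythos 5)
Your sketch is correct and is exactly the standard argument (Plancherel plus the comparability $(1+|\xi|^2)^{s/2}\sim 2^{qs}$ on each dyadic annulus, bounded overlap for one direction, and quasi-orthogonality of the blocks via \eqref{lpfond2} for the converse); the paper itself does not prove this proposition but simply cites \cite[Prop. 4.1.11 and Prop. 4.1.12]{Metivier}, which is precisely where your argument is carried out in detail. Nothing further is needed.
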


\begin{prop} \label{RevSobolevLP}  Let $s \in \R$ and $R \in \R_{>1}$. Suppose that  a sequence $(u_q)_{q \in \Z_{\geq -1}}$ in $L^2(\R^n_x)$ satisfies \begin{enumerate}[(i)]
\item $\spec(u_{-1}) \subseteq \{\xi \in \R_\xi^n : |\xi| \leq R \}$ and, for all $q \geq 0$, \begin{equation*}
\spec(u_q) \subseteq \{\xi \in \R_\xi^n : R^{-1} 2^{q} \leq |\xi| \leq 2R 2^{q}\},
\end{equation*} 
\item the sequence $(\delta_q)_{q \geq -1}$, where $\delta_q := 2^{qs}\|u_q\|_{L^2(\R^n_x)}$, belongs to $l^2(\Z_{\geq -1})$.
\end{enumerate} 
Then $u = \sum\limits_{q \geq -1} u_q \in H^s(\R^n_x)$ and there exists $C_s \geq 1$ such that, for all $u \in H^s(\R^n_x)$, we have \begin{equation*}
\frac{1}{C_s}\|u\|_{H^s(\R^n_x)} \leq \| \delta_q \|_{l^2(\Z_{\geq -1})} \leq C_s \|u\|_{H^s(\R^n_x)}.
\end{equation*} When $s > 0$ it is enough to assume, instead if (i),  that, for all $q \geq -1$,
 \begin{equation*}
\spec(u_q) \subseteq \{\xi \in \R_\xi^n : |\xi| \leq R 2^q \}.
\end{equation*}
\end{prop}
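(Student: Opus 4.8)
The statement splits naturally into two cases, according to which spectral localisation hypothesis is available, and in neither case is a deep estimate needed: the whole argument is bookkeeping with the supports $\spec(u_q)$ together with Cauchy--Schwarz, Young's convolution inequality, and the elementary comparison $(1+|\xi|^2)^s\asymp 2^{2qs}$ on dyadic annuli.

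\emph{Case 1: the annular condition (i), arbitrary $s\in\R$.} The key elementary observation is a finite overlap property: there is an integer $N_0=N_0(R)\ge 1$ such that every $\xi\in\R^n_\xi$ lies in at most $N_0$ of the sets $\spec(u_q)$. Indeed, $\xi\in\spec(u_q)$ with $q\ge 0$ forces $R^{-1}2^q\le|\xi|\le 2R2^q$, i.e. $\log_2|\xi|-1-\log_2 R\le q\le\log_2|\xi|+\log_2 R$, so $q$ is confined to an interval of length $\log_2(2R^2)$ independent of $\xi$; the index $q=-1$ adds at most one more. Writing $\widehat u=\sum_{q\ge-1}\widehat u_q$, a sum with at most $N_0$ nonzero terms at each $\xi$, and applying Cauchy--Schwarz pointwise in $\xi$ gives $|\widehat u(\xi)|^2\le N_0\sum_{q\ge-1}|\widehat u_q(\xi)|^2$. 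On $\spec(u_q)$ one has $(1+|\xi|^2)^s\le C_{s,R}\,2^{2qs}$: for $s\ge 0$ this uses $|\xi|\le 2R2^q$ and $2^q\ge 1$ when $q\ge 0$, while for $s<0$ it uses $|\xi|\ge R^{-1}2^q$ and the monotonicity of $x\mapsto x^{2s}$; the term $q=-1$ is controlled on $\{|\xi|\le R\}$ by a constant. Hence, by Plancherel's theorem,
\begin{equation*}
\|u\|_{H^s}^2=\int_{\R^n_\xi}(1+|\xi|^2)^s|\widehat u(\xi)|^2\,d\xi\le N_0\sum_{q\ge-1}\int(1+|\xi|^2)^s|\widehat u_q(\xi)|^2\,d\xi\le C_{s,R}N_0\sum_{q\ge-1}\delta_q^2<+\infty.
\end{equation*}
Applied to the tails $\sum_{q=M+1}^{N}u_q$, the same estimate shows the partial sums are Cauchy in $H^s$, so $\sum_{q\ge-1}u_q$ converges in $H^s$, hence in $\mathcal S'$, to $u$, and $\|u\|_{H^s}\le C_s\|(\delta_q)\|_{l^2}$.

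\emph{Case 2: the ball condition, $s>0$.} Here the finite overlap property is lost, and one exploits $s>0$ instead. First, $\sum_{q\ge-1}\|u_q\|_{L^2}=\sum_{q\ge-1}2^{-qs}\delta_q\le\big(\sum_{q\ge-1}2^{-2qs}\big)^{1/2}\|(\delta_q)\|_{l^2}<+\infty$, so $\sum u_q$ converges in $L^2\subseteq\mathcal S'$; call the limit $u$. To compute the $H^s$ norm we invoke Proposition \ref{SobolevLP}, so we must bound $\|\Delta_j u\|_{L^2}$. Since $\spec(u_q)\subseteq\{|\xi|\le R2^q\}$ and $\spec(\Delta_j u)\subseteq\{\tfrac34 2^j\le|\xi|\le\tfrac83 2^j\}$ for $j\ge 0$, one has $\Delta_j u_q=0$ unless $\tfrac34 2^j\le R2^q$, i.e. $q\ge j-N_1$ with $N_1=\lceil\log_2(4R/3)\rceil$; and, each $\Delta_j$ being a Fourier multiplier with symbol bounded by $1$, hence an $L^2$-contraction, $\|\Delta_j u\|_{L^2}\le\sum_{q\ge j-N_1}\|u_q\|_{L^2}$ (padding $\delta_k:=0$ for $k<-1$ automatically covers the low-frequency index $j=-1$). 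Multiplying by $2^{js}$,
\begin{equation*}
2^{js}\|\Delta_j u\|_{L^2}\le\sum_{q\ge j-N_1}2^{(j-q)s}\,\big(2^{qs}\|u_q\|_{L^2}\big)=\sum_{m\ge-N_1}\beta_m\,\delta_{j+m},\qquad \beta_m:=2^{-ms}\,\mathbf 1_{\{m\ge-N_1\}}.
\end{equation*}
Since $s>0$, $\beta\in l^1(\Z)$ with $\|\beta\|_{l^1}=\sum_{m\ge-N_1}2^{-ms}<+\infty$, so Young's convolution inequality gives $\big\|(2^{js}\|\Delta_j u\|_{L^2})_{j}\big\|_{l^2}\le\|\beta\|_{l^1}\|(\delta_q)\|_{l^2}$. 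By Proposition \ref{SobolevLP}, $u\in H^s$ and $\|u\|_{H^s}\le C_s\|(\delta_q)\|_{l^2}$. In either case, the companion estimate $\|(\delta_q)\|_{l^2}\le C_s\|u\|_{H^s}$ recorded in the statement then follows by applying Proposition \ref{SobolevLP} to $u$ itself.

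\emph{Main obstacle.} There is no real analytic difficulty; the points demanding care are purely combinatorial: keeping the overlap constant $N_0$ (respectively the shift $N_1$) independent of $\xi$ and of $q$, handling both signs of $s$ in the annular comparison $(1+|\xi|^2)^s\asymp 2^{2qs}$, and recognising that in Case 2 the loss of finite overlap is exactly what forces the hypothesis $s>0$, so that the geometric series $\sum_{m}2^{-ms}$ converges and Young's inequality applies. The low-frequency block $q=-1$ is harmless throughout and can be absorbed into the constants.
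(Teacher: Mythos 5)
Your argument for the substantive direction of the proposition --- that the spectral localisation plus $(\delta_q)\in l^2$ imply $u=\sum_q u_q$ converges in $H^s$ with $\|u\|_{H^s}\le C_s\|(\delta_q)\|_{l^2}$ --- is correct in both cases, and there is nothing to compare it with internally: the paper does not prove this proposition but refers to M\'etivier's lecture notes (Prop.~4.1.12 there), and what you wrote is precisely the standard textbook proof (finite overlap of the annuli plus Cauchy--Schwarz and Plancherel when the supports are annular; the $l^1$--$l^2$ convolution estimate $2^{js}\|\Delta_j u\|_{L^2}\le\sum_{m}\beta_m\delta_{j+m}$ with $\beta\in l^1$ when only the ball condition and $s>0$ are available). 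One caveat concerns your last sentence. The ``companion estimate'' $\|(\delta_q)\|_{l^2}\le C_s\|u\|_{H^s}$ does \emph{not} follow from Proposition~\ref{SobolevLP} applied to $u$, because $\delta_q=2^{qs}\|u_q\|_{L^2}$ and the $u_q$ are arbitrary spectrally localised pieces, not the blocks $\Delta_q u$; indeed that inequality is false in general (take $u_0=f$, $u_1=-f$ with $\spec(f)$ in the overlap of the two annuli: then $u=0$ but $\delta_0,\delta_1\neq0$). This is a defect of the statement as printed in the paper --- it was copied verbatim from the equivalence in Proposition~\ref{SobolevLP}, whereas the synthesis result only asserts the one-sided bound $\|u\|_{H^s}\le C_s\|(\delta_q)\|_{l^2}$, which is the only part used later --- but you should not paper over it by invoking Proposition~\ref{SobolevLP}; either omit the reverse inequality or note explicitly that it cannot hold for general $(u_q)$.
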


The following result will be crucial in the sequel.

\begin{prop} \label{BernsteinComm} There exists a constant $C>0$ such that the following estimates hold true: \begin{enumerate}[(i)]
\item (Bernstein inequalities) for $u \in L^p(\R^n_x)$, $p \in [1+\infty]$: \begin{eqnarray*}
&& \|\nabla_x S_q u\|_{L^p(\R^n_x)}\le C 2^{q} \|u\|_{L^p(\R^n_x)}, \quad q \geq 0, \\
&&  \frac{1}{C}\|\Delta_q u\|_{L^p(\R^n_x)} \leq 2^{-q}\|\nabla_x \Delta_q u \|_{L^p(\R^n_x)} \leq C \|\Delta_q u \|_{L^p(\R^n_x)}, \quad q \geq 0.
\end{eqnarray*} For $q=-1$ only $\|\nabla_x \Delta_{-1} u \|_{L^p(\R^n_x)} \leq C \|\Delta_{-1} u \|_{L^p(\R^n_x)}$ holds.
\item (Commutator estimate) for $a \in L^\infty(\R^n_x)$ and $u \in L^2(\R^n_x)$: \begin{equation} \label{CM}
\| [S_{q'}a,\Delta_q] \Delta_p u \|_{L^2(\R^n_x)}\le C 2^{-p} \| \nabla  S_{q'}a \|_{L^\infty(\R^n_x)} \|\Delta_p u \|_{L^2(\R^n_x)}, \quad q'\geq 0, \,\, p,q \geq -1.
\end{equation}
\end{enumerate} \end{prop}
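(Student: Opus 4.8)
\textbf{Proof plan for Proposition \ref{BernsteinComm}.}

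The plan is to reduce everything to the fundamental observation that each operator $S_q = \chi(2^{-q}D_x)$, $\Delta_q = \varphi(2^{-q}D_x)$ (for $q \geq 0$) and $\Delta_{-1} = \chi(D_x)$ is a convolution operator with a kernel obtained by rescaling a fixed Schwartz function. Concretely, if $\check\chi = \mathcal F^{-1}\chi$ and $\check\varphi = \mathcal F^{-1}\varphi$, then $S_q u = (2^{qn}\check\chi(2^q\cdot))\ast u$ and $\Delta_q u = (2^{qn}\check\varphi(2^q\cdot))\ast u$, both kernels having $L^1$-norm independent of $q$; hence by Young's inequality $\|S_q u\|_{L^p} \le \|\check\chi\|_{L^1}\|u\|_{L^p}$ and similarly for $\Delta_q$, uniformly in $p \in [1,+\infty]$ and $q$. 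The first Bernstein inequality then follows by writing $\partial_{x_\ell} S_q u = (2^{qn}\,2^q\,(\partial_{x_\ell}\check\chi)(2^q\cdot))\ast u$, so that $\|\nabla_x S_q u\|_{L^p} \le C 2^q \|u\|_{L^p}$ with $C = \|\nabla\check\chi\|_{L^1}$. For the two-sided estimate in (i): the upper bound $\|\nabla_x \Delta_q u\|_{L^p} \le C 2^q \|\Delta_q u\|_{L^p}$ is obtained exactly as above (using that $\Delta_q$ is itself a Fourier multiplier and composing), while the lower bound uses that, on the support of $\varphi(2^{-q}\cdot)$, one has $|\xi| \geq \tfrac34 2^q$, so one may choose $\tilde\varphi \in C_0^\infty$ equal to $|\xi|^{-2}\xi$ on an annulus containing $\mathrm{supp}(\varphi)$ and write $\Delta_q u = \sum_\ell (2^{-q}\tilde\varphi_\ell(2^{-q}D_x))\,\partial_{x_\ell}\Delta_q u$; the multiplier $2^{-q}\tilde\varphi_\ell(2^{-q}\cdot)$ has $L^1$-inverse-Fourier-transform of size $2^{-q}\cdot 2^q = $ bounded times $2^{-q}$... more precisely, writing it as $2^{-q}$ times a rescaled fixed Schwartz kernel gives $\|\Delta_q u\|_{L^p} \le C 2^{-q}\|\nabla_x\Delta_q u\|_{L^p}$. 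For $q = -1$ only the upper bound survives since $\mathrm{spec}(\Delta_{-1}u)$ contains a neighbourhood of the origin and $|\xi|^{-2}\xi$ is not smooth there.

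For the commutator estimate (ii), I would write out $[S_{q'}a,\Delta_q]\Delta_p u$ as an integral operator. Denoting $g := \Delta_p u$, $b := S_{q'}a$, and $k_q := 2^{qn}\check\varphi(2^q\cdot)$ the kernel of $\Delta_q$ (taking $q \geq 0$; the case $q = -1$ is identical with $\check\chi$), we have
\[
\bigl([b,\Delta_q]g\bigr)(x) = \int k_q(x-y)\bigl(b(x)-b(y)\bigr)g(y)\,dy .
\]
Since $b = S_{q'}a$ is a smooth function (Fourier support in a ball), the mean value theorem gives $b(x)-b(y) = (x-y)\cdot\int_0^1 \nabla b(y+\tau(x-y))\,d\tau$, hence
\[
\bigl|\bigl([b,\Delta_q]g\bigr)(x)\bigr| \le \|\nabla b\|_{L^\infty}\int |x-y|\,|k_q(x-y)|\,|g(y)|\,dy ,
\]
and the kernel $h_q(z) := |z|\,|k_q(z)| = 2^{-q}\cdot |2^q z|\,|2^{qn}\check\varphi(2^q z)|$ is again a rescaling of the fixed integrable function $z \mapsto |z||\check\varphi(z)|$, so $\|h_q\|_{L^1} = 2^{-q}\,\||\cdot|\check\varphi\|_{L^1} =: C\,2^{-q}$. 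Young's inequality then yields $\|[b,\Delta_q]g\|_{L^2} \le C\,2^{-q}\,\|\nabla b\|_{L^\infty}\,\|g\|_{L^2}$. The stated estimate has the factor $2^{-p}$ rather than $2^{-q}$; this is recovered because $\varphi(2^{-q}\cdot)\varphi(2^{-p}\cdot) = 0$ unless $|p-q| \le 1$, so $\Delta_q\Delta_p u = 0$ except when $2^{-q}$ and $2^{-p}$ are comparable, and on that range $2^{-q} \le 2\cdot 2^{-p}$; for the low-frequency case $q = -1$ one uses that $\Delta_{-1}\Delta_p u = 0$ unless $p \le 0$, again forcing comparability. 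Absorbing the resulting constant completes (ii).

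The only real subtlety — and the step I would be most careful about — is the uniformity of the kernel $L^1$-norms: one must check that rescaling $z \mapsto 2^{qn}f(2^q z)$ preserves the $L^1$-norm exactly (a change of variables), that multiplying $f$ by $|z|$ or differentiating keeps it in $L^1$ (true because $\check\chi,\check\varphi \in \mathcal S(\R^n)$, being inverse Fourier transforms of $C_0^\infty$ functions), and that the book-keeping of the $2^{\pm q}$ powers is consistent under these rescalings. Once this is in place, the whole proposition is a sequence of applications of Young's convolution inequality together with the disjointness relation \eqref{lpfond2} for the frequency supports; no genuinely hard analysis is involved, which is why the paper can afford to state it and defer the routine verification.
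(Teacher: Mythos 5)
Your treatment of part (i) is correct and is the standard rescaled-kernel argument (the paper simply cites M\'etivier for it). The gap is in part (ii), in the passage from the factor $2^{-q}$ to the factor $2^{-p}$. Your kernel/mean-value estimate correctly yields
\[
\|[S_{q'}a,\Delta_q]\Delta_p u\|_{L^2(\R^n_x)}\le C\,2^{-q}\,\|\nabla S_{q'}a\|_{L^\infty(\R^n_x)}\|\Delta_p u\|_{L^2(\R^n_x)},
\]
but the claim that the commutator vanishes unless $|p-q|\le 1$ is false. Writing $[b,\Delta_q]g=b\,\Delta_q g-\Delta_q(bg)$ with $b=S_{q'}a$ and $g=\Delta_p u$, only the first term is killed by \eqref{lpfond2} when $|p-q|\ge 2$; the second term $\Delta_q(S_{q'}a\,\Delta_p u)$ survives in general, because multiplication by $S_{q'}a$ (whose spectrum fills a ball of radius $\sim 2^{q'}$) spreads the spectrum of $\Delta_p u$ over essentially all frequencies $\lesssim 2^{\max(p,q')}$. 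Indeed the proposition is applied in Lemma \ref{lemma} precisely to terms such as $[\Delta_q,S_{q-5}a]\Delta_{q-4}b$, i.e.\ with $|p-q|=4$, where $\Delta_q\Delta_p=0$ yet the commutator is nonzero. In the regime $p>q+1$ your bound $2^{-q}$ is strictly weaker than the asserted $2^{-p}$, so \eqref{CM} as stated is not proved.

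The repair is the route the paper takes: for $p\ge 0$ write $\Delta_p u=\sum_{k}\partial_{x_k}w_k$ with $\|w_k\|_{L^2(\R^n_x)}\le C2^{-p}\|\Delta_p u\|_{L^2(\R^n_x)}$ (this is exactly your reverse Bernstein construction with the multiplier $\xi_k|\xi|^{-2}\tilde\varphi(2^{-p}\xi)$), and then apply the first-order commutator estimate $\|[b,\Delta_q]\partial_{x_k}w\|_{L^2(\R^n_x)}\le C\|\nabla b\|_{L^\infty(\R^n_x)}\|w\|_{L^2(\R^n_x)}$, which is the Coifman--Meyer theorem quoted as \eqref{CM1}. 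You could also prove that first-order estimate by your own kernel method after one integration by parts, since the commutator then has kernel $(\partial_k k_q)(x-y)(b(x)-b(y))$ plus a term carrying $\partial_k b$ directly, and $|z|\,|\nabla k_q(z)|$ has $L^1$-norm bounded uniformly in $q$. For $p=-1$ your $2^{-q}$ bound already suffices, since $2^{-q}\le 2=2^{-p}$ for all $q\ge -1$.
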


\begin{proof} The proof of the Bernstein inequalities can be found in \cite[Cor. 4.1.17]{Metivier}. The commutator estimate follows from  \cite[Th. 35]{CM1978}. This result applied to our case reads
\begin{equation} \label{CM1}
\|[a,\Delta_q]\partial_{x_k}u\|_{L^2(\R_x^n)} \leq C \|\nabla_x a\|_{L^\infty(\R_x^n)} \|u\|_{L^2(\R_x^n)}
\end{equation} for $a \in \Lip(\R_x^n)$ and $u \in H^1(\R_x^n)$. Estimate \eqref{CM} follows from \eqref{CM1} writing $\Delta_q u$ as a sum of derivatives. \end{proof}

The proof of the following proposition can be found in \cite[Prop. 1.5]{Taylor}.

\begin{prop} 
\label{PropMLP} Let $\omega$ be a modulus of continuity. 
Then, for all $u\in C^\omega(\R^n_x)$,  
 \begin{equation}
 \|\nabla_x S_qu\|_{L^\infty(\R^n_x)} \leq C 2^q \omega(2^{-q}).
 \label{nablaS}
\end{equation} 
Conversely, given $u \in L^\infty(\R^n_x)$, if \eqref{nablaS} holds, then $u \in C^\sigma(\R^n_x)$, where 
$\sigma(h) = \int\limits_0^h \frac{\omega(t)}{t} dt.$
\end{prop}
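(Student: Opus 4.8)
The plan is to establish the two implications separately, in both cases exploiting that $S_q$ is convolution against a rescaled Schwartz kernel $\psi_q(z)=2^{qn}\psi(2^qz)$, where $\psi:=\mathcal F^{-1}\chi\in\mathcal S(\R^n_x)$.

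For the direct implication I would use that $\chi(0)=1$, so $\int_{\R^n_x}\psi_q\,dz=1$ and hence $\int_{\R^n_x}\nabla_x\psi_q\,dz=0$, which gives
\begin{equation*}
\nabla_xS_qu(x)=\int_{\R^n_x}\nabla_x\psi_q(z)\bigl(u(x-z)-u(x)\bigr)\,dz=2^q\int_{\R^n_x}(\nabla\psi)(w)\bigl(u(x-2^{-q}w)-u(x)\bigr)\,dw.
\end{equation*}
I would split this integral into the regions $|w|\le2^q$ and $|w|>2^q$. On the first, $|u(x-2^{-q}w)-u(x)|\le\|u\|_{C^\omega}\,\omega(2^{-q}|w|)$ and, since $s\mapsto\omega(s)/s$ is decreasing, $\omega(2^{-q}|w|)\le(1+|w|)\,\omega(2^{-q})$; as $\nabla\psi$ is rapidly decreasing, $\int(1+|w|)|(\nabla\psi)(w)|\,dw<+\infty$ and this region contributes at most $C\|u\|_{C^\omega}2^q\omega(2^{-q})$. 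On the second region, $|u(x-2^{-q}w)-u(x)|\le2\|u\|_{L^\infty}$ and $\int_{|w|>2^q}|(\nabla\psi)(w)|\,dw\le C_M2^{-qM}$ for every $M$; combining this with $2^q\omega(2^{-q})\ge\omega(1)$ shows that this contribution is also $\le C\|u\|_{L^\infty}2^q\omega(2^{-q})$, and \eqref{nablaS} follows.

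For the converse I would fix $x,y\in\R^n_x$ with $h:=|x-y|<1$, take the integer $N\ge0$ with $2^{-N-1}<h\le2^{-N}$, and write
\begin{equation*}
u(x)-u(y)=\bigl(S_Nu(x)-S_Nu(y)\bigr)+\sum_{q\ge N}\bigl(\Delta_qu(x)-\Delta_qu(y)\bigr),
\end{equation*}
a uniformly convergent expansion once \eqref{nablaS}, and hence the bound $\|\Delta_qu\|_{L^\infty}\le C\omega(2^{-q})$ below, is available. The first term is bounded by $\|\nabla_xS_Nu\|_{L^\infty}h\le C2^N\omega(2^{-N})h\le C\omega(2^{-N})\le C\omega(h)\le C\sigma(h)$, using \eqref{nablaS}, the doubling property \eqref{duplicate} (valid for $\omega$ too, and applicable since $2^{-N}<2h$) and $\omega(h)\le\sigma(h)$, which again follows from the monotonicity of $\omega(t)/t$. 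For the tail, the Bernstein inequality in Proposition~\ref{BernsteinComm}(i) gives $\|\Delta_qu\|_{L^\infty}\le C2^{-q}\|\nabla_x\Delta_qu\|_{L^\infty}$ for $q\ge0$, while $\nabla_x\Delta_qu=\nabla_xS_{q+1}u-\nabla_xS_qu$ together with \eqref{nablaS} and the monotonicity of $\omega$ yields $\|\nabla_x\Delta_qu\|_{L^\infty}\le C2^q\omega(2^{-q})$, whence $\|\Delta_qu\|_{L^\infty}\le C\omega(2^{-q})$ for $q\ge0$. Therefore
\begin{equation*}
\sum_{q\ge N}\bigl|\Delta_qu(x)-\Delta_qu(y)\bigr|\le2\sum_{q\ge N}\|\Delta_qu\|_{L^\infty}\le C\sum_{q\ge N}\omega(2^{-q})\le C\int_0^{2^{-N}}\frac{\omega(t)}{t}\,dt=C\sigma(2^{-N})\le C\sigma(h),
\end{equation*}
where the dyadic-sum/integral comparison, and the final step $\sigma(2^{-N})\le\sigma(2h)\le C\sigma(h)$ (the doubling of $\sigma$ coming from $\sigma(2h)-\sigma(h)=\int_h^{2h}\omega(t)/t\,dt\le\omega(2h)\le C\omega(h)$), again only use the monotonicity of $\omega(t)/t$. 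Adding the two contributions gives $|u(x)-u(y)|\le C\sigma(|x-y|)$, i.e. $u\in C^\sigma(\R^n_x)$.

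The computations are elementary; the two places that need a little care are the ``far'' region $|w|>2^q$ in the direct part — where $\omega$ no longer governs the increments of $u$ and one must fall back on $\|u\|_{L^\infty}$ and the Schwartz decay of $\psi$ — and, in the converse, the passage from the dyadic series $\sum_{q\ge N}\omega(2^{-q})$ to the integral $\sigma(2^{-N})$ together with the verification that $\sigma$ is itself a doubling modulus of continuity. All of this rests on the monotonicity and concavity properties of the modulus of continuity recalled just after its definition.
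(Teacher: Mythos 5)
Your proof is correct. Note that the paper itself does not prove this proposition --- it simply cites \cite[Prop.~1.5]{Taylor} --- and your argument is essentially the standard one found there and in the Colombini--Lerner tradition: for the direct part, writing $\nabla_x S_q u$ as convolution of $u(x-\cdot)-u(x)$ against $\nabla_x\psi_q$ and splitting according to $|w|\le 2^q$ versus $|w|>2^q$; for the converse, splitting $u$ at the dyadic scale of $|x-y|$ and using the gradient bound on $S_N u$ and the telescoping bound $\|\Delta_q u\|_{L^\infty}\le C\omega(2^{-q})$ on the tail. The only points worth being explicit about in a written version are the ones you already flag, plus the harmless convention that $\omega(2h)$ must be interpreted via $\omega(\min(2h,1))$ when $1/2<h<1$.
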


The main consequence of Proposition \ref{PropMLP} is contained in the following corollary.

\begin{cor} \label{Cor1}
Let $\omega$ be a modulus of continuity satisfying condition \eqref{Dini1}.
Then a function $u \in L^\infty(\R^n_x)$ belongs to $C^\omega(\R^n_x)$ if and only if 
\begin{equation}\label{charComega}
\sup\limits_{q \in \N_0} \frac{\|\nabla_x (S_q u)\|_{L^\infty(\R^n_x)}}{2^q\omega(2^{-q})}  < +\infty.
\end{equation}
\end{cor}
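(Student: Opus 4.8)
The plan is to derive Corollary~\ref{Cor1} directly from Proposition~\ref{PropMLP}, the Dini-type hypothesis \eqref{Dini1}, and the elementary properties of moduli of continuity recorded in the remark after the definition. The forward implication is immediate: if $u\in C^\omega(\R^n_x)$, then \eqref{nablaS} holds, which is exactly the statement that the supremum in \eqref{charComega} is finite. So the content is the converse.

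For the converse, I would start from the assumption that the quantity in \eqref{charComega} is finite, say bounded by $M$. Then $\|\nabla_x S_q u\|_{L^\infty}\le M\,2^q\omega(2^{-q})$ for all $q\in\N_0$, i.e. the hypothesis \eqref{nablaS} of the second half of Proposition~\ref{PropMLP} is satisfied. Proposition~\ref{PropMLP} then gives $u\in C^\sigma(\R^n_x)$ with $\sigma(h)=\int_0^h \frac{\omega(t)}{t}\,dt$. Now I invoke \eqref{Dini1}, which says precisely $\sigma(h)\le C\,\omega(h)$ for all $h\in(0,1]$. Since also $\omega(h)\le\sigma(h)$ trivially (as $\omega(t)/t$ is decreasing, $\sigma(h)=\int_0^h \frac{\omega(t)}{t}\,dt\ge \int_0^h \frac{\omega(h)}{h}\,dt=\omega(h)$), the moduli $\sigma$ and $\omega$ are equivalent, hence $C^\sigma(\R^n_x)=C^\omega(\R^n_x)$ with comparable norms. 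This yields $u\in C^\omega(\R^n_x)$ and closes the loop.

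A couple of small technical points need care. First, one should check that $\sigma$ is itself a genuine modulus of continuity (continuous, strictly increasing, concave, vanishing at $0$), so that $C^\sigma$ is well defined; continuity and monotonicity are clear, concavity follows because $\sigma'(h)=\omega(h)/h$ is decreasing, and $\sigma(0)=0$ holds because $\omega(t)/t$ is bounded near $0$ (the limit $\lim_{t\to0+}\omega(t)/t$ exists by concavity), so the integral converges. Second, in passing from the bound on $\nabla_x S_q u$ to membership in $C^\sigma$ one uses the telescoping identity $u=S_0u+\sum_{q\ge0}\Delta_q u$ together with the Bernstein-type control of $\Delta_q u$ by $\nabla_x S_{q+1}u$; but this is internal to Proposition~\ref{PropMLP}, which we are entitled to use as a black box.

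I do not expect a serious obstacle here: the corollary is essentially a repackaging of Proposition~\ref{PropMLP} once \eqref{Dini1} is available to absorb $\sigma$ into $\omega$. The only mildly delicate point is making the comparison $\omega\le\sigma\le C\omega$ fully rigorous and observing that equivalence of the two moduli gives equality of the two H\"older-type spaces with equivalence of norms; everything else is bookkeeping with the dyadic blocks.
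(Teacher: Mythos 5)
Your approach is exactly the intended one --- the corollary is a direct repackaging of Proposition~\ref{PropMLP}, with \eqref{Dini1} providing the comparison $\sigma\le C\omega$ that makes $C^\sigma$ collapse onto $C^\omega$ --- and the paper offers no separate argument, so this is the right derivation. One small slip in the side remark: you assert that ``$\omega(t)/t$ is bounded near $0$'', citing that $\lim_{t\to 0^+}\omega(t)/t$ exists by concavity. That limit always exists as an element of $[0,+\infty]$ because $\omega(t)/t$ is monotone, but it can perfectly well equal $+\infty$; for instance the modulus $\omega(s)=s\sqrt{1+|\log s|}$ used as an example in the paper satisfies \eqref{Dini1} yet has $\omega(t)/t=\sqrt{1+|\log t|}\to+\infty$. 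So boundedness of $\omega(t)/t$ is the wrong justification for the convergence of the integral and for $\sigma(0)=0$. The correct one is \eqref{Dini1} itself: $0\le\sigma(h)\le C\omega(h)\to 0$ as $h\to 0^+$, which simultaneously gives finiteness of $\sigma$, its vanishing at $0$, and the inclusion $C^\sigma\subseteq C^\omega$ that you need. With this small repair the argument is complete; the observation $\omega\le\sigma$ you also include is correct but not strictly needed, since the forward implication is handled directly by the first half of Proposition~\ref{PropMLP}.
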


Other interesting properties of the Littlewood-Paley decomposition are contained in the following proposition.

\begin{prop} \label{ComEst1} 
Let $a \in C^\omega(\R^n_x)$. Then, for all $q\geq -1$
\begin{equation} \label{Deltaa}
\|\Delta_q a\|_{L^\infty(\R^n_x)} \leq C \|a\|_{C^\omega(\R^n_x)}\omega(2^{-q}),
\end{equation} 
and, if additionally \eqref{Dini1} holds, 
\begin{equation} \label{pos}
\|a-S_qa\|_{L^\infty(\R^n_x)} \leq C\|a\|_{C^\omega(\R_x^n)} \omega(2^{-q}).
\end{equation} \end{prop}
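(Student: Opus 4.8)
The plan is to prove the two estimates \eqref{Deltaa} and \eqref{pos} using the Littlewood--Paley characterization of $C^\omega$ together with the almost-orthogonality of the dyadic blocks.

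\medskip

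\textit{Proof of \eqref{Deltaa}.} First I would observe that the constant block $q=-1$ is trivial: $\|\Delta_{-1}a\|_{L^\infty}\le \|\chi\|_{L^1}\|a\|_{L^\infty}\le C\|a\|_{C^\omega}\omega(1)$ up to the normalization of $\omega$, using $\omega(2^{-q})\ge \omega(1/2)>0$ for $q=-1$. For $q\ge 0$, the key point is that $\Delta_q$ kills constants: since $\widehat{\Delta_q a}(\xi)=\varphi(2^{-q}\xi)\widehat a(\xi)$ and $\varphi$ vanishes near $\xi=0$, we may write, for any constant $c$, $\Delta_q a = \Delta_q(a-c)$. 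Choosing $c$ to be a local average of $a$ and exploiting that the convolution kernel $2^{qn}\check\varphi(2^q\cdot)$ of $\Delta_q$ has integral zero and is well localized at scale $2^{-q}$, one gets $\|\Delta_q a\|_{L^\infty}\le C\,\sup_{|y|\lesssim 2^{-q}}\|a(\cdot-y)-a(\cdot)\|_{L^\infty}\le C\|a\|_{C^\omega}\,\omega(2^{-q})$. Alternatively, and more in the spirit of the paper, one can derive \eqref{Deltaa} for $q\ge 1$ from Proposition \ref{PropMLP}: since $\Delta_q a = S_{q+1}a - S_q a$, we have $\Delta_q a = \int$ (telescoping), but it is cleaner to note $\Delta_{q}a=\Delta_q(a-S_{q-1}a)$ and use $\|\Delta_q\|_{L^\infty\to L^\infty}\le C$ together with the bound $\|a-S_{q-1}a\|_{L^\infty}$, which is itself \eqref{pos}; to avoid circularity I would instead bound $\|\Delta_q a\|_{L^\infty}$ directly by the mean-zero-kernel argument above. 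Hence I will present the mean-zero kernel estimate as the main line.

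\medskip

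\textit{Proof of \eqref{pos}.} Write the telescoping identity $a - S_q a = \sum_{p\ge q}\Delta_p a$ in $L^\infty$ (which converges because, by \eqref{Deltaa} and concavity, $\sum_{p\ge q}\|\Delta_p a\|_{L^\infty}\le C\|a\|_{C^\omega}\sum_{p\ge q}\omega(2^{-p})<+\infty$). Then
\[
\|a-S_q a\|_{L^\infty}\le \sum_{p\ge q}\|\Delta_p a\|_{L^\infty}\le C\|a\|_{C^\omega}\sum_{p\ge q}\omega(2^{-p}).
\]
Now I would estimate the tail sum by an integral: by monotonicity of $\omega$, $\sum_{p\ge q}\omega(2^{-p})\le C\sum_{p\ge q}\int_{2^{-p-1}}^{2^{-p}}\frac{\omega(t)}{t}\,dt\cdot\frac{1}{\log 2}$... more precisely, since $\omega$ is increasing, $\omega(2^{-p})\le \frac{1}{\log 2}\int_{2^{-p}}^{2^{-p+1}}\frac{\omega(t)}{t}\,dt$, so $\sum_{p\ge q}\omega(2^{-p})\le \frac{1}{\log 2}\int_0^{2^{-q+1}}\frac{\omega(t)}{t}\,dt\le \frac{C}{\log 2}\,\omega(2^{-q+1})\le C'\omega(2^{-q})$, where the second-to-last step is exactly the Dini-type hypothesis \eqref{Dini1} and the last step uses \eqref{duplicate}. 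This yields \eqref{pos}.

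\medskip

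\textit{Main obstacle.} The only delicate point is getting \eqref{Deltaa} cleanly without invoking \eqref{pos}, i.e. establishing that $\Delta_q$ applied to a $C^\omega$ function loses exactly a factor $\omega(2^{-q})$; this is the standard "mean-zero kernel meets modulus of continuity" computation and must be done by hand for $q\ge 0$, with the $q=-1$ case handled separately by plain boundedness. Once \eqref{Deltaa} is in place, \eqref{pos} is a short summation argument resting entirely on \eqref{Dini1} and \eqref{duplicate}. I would also remark that \eqref{pos} can alternatively be read off directly from Proposition \ref{PropMLP}: $\|\nabla_x S_q u\|_{L^\infty}\le C2^q\omega(2^{-q})$ gives $S_q u\to u$ with the stated rate via the same integral comparison, but the route through \eqref{Deltaa} keeps everything self-contained.
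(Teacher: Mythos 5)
Your proof is correct and takes essentially the same route as the paper: the paper cites \cite[Prop.~3.4]{CL} for \eqref{Deltaa}, which is precisely the mean-zero-kernel argument you sketch, and your telescoping-plus-\eqref{Dini1} derivation of \eqref{pos} is verbatim the paper's. One small caution on \eqref{Deltaa}: the kernel $\check\varphi$ of $\Delta_q$ is Schwartz but not compactly supported, so the intermediate bound should really read $\int 2^{qn}|\check\varphi(2^q y)|\,\omega(\min(|y|,1))\,dy$ rather than $\sup_{|y|\lesssim 2^{-q}}\|a(\cdot-y)-a\|_{L^\infty}$; one still concludes $C\,\omega(2^{-q})$ using the concavity inequality $\omega(\lambda t)\le\lambda\,\omega(t)$ for $\lambda\ge 1$ together with the rapid decay of $\check\varphi$.
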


\begin{proof} The proof of \eqref{Deltaa}  is the same as \cite[Prop. 3.4]{CL}. To prove the second estimate we note that 
\begin{equation*}
a-S_qa = \sum\limits_{p \geq q} \Delta_p a
\end{equation*} and therefore, from \eqref{Deltaa}, we get 
\begin{equation*}
\|a-S_qa\|_{L^\infty(\R^n_x)} \leq \sum\limits_{p \geq q} \|\Delta_p a\|_{L^\infty(\R_x^n)} \leq C \|a\|_{C^\omega(\R_x^n)} \sum\limits_{p \geq q}\omega(2^{-p}).
\end{equation*} 
An elementary computation gives that \eqref{Dini1} is equivalent to 
$\sum\limits_{p \geq q} \omega(2^{-p}) \leq \omega(2^{-q+1})$. This concludes the proof.
\end{proof}

\begin{rem}\label{Pos} 
Estimate \eqref{pos} implies that $(S_qa_{jk})_{j,k=1}^n$ is a positive matrix if $(a_{jk})_{j,k=1}^n$ is a positive matrix and $q$ is sufficiently large. \end{rem}

For later use we introduce a weighted Sobolev space.

\begin{defi} \label{WeightSobolev} Let $s \in \R$ and $\omega$ be a modulus of continuity. Let  $\Omega(q)=2^{q} \omega(2^{-q})$.  We say that $u \in \mathcal S'(\R_x^n)$ belongs to $H^{s}_{\Omega}(\R_x^n)$ if 
\begin{equation*}
\|u\|_{H^{s}_{\Omega}(\R_x^n)}:=\Big (\sum\limits_{q \geq -1} 2^{2sq}\Omega^2(q) \|\Delta_q u\|_{L^2(\R_x^n)}^2\Big)^{1\over 2} < +\infty.
\end{equation*}
\end{defi}

\subsection{Bony's paraproduct}

Let us now define Bony's paraproduct (see \cite{Bony81}) for tempered distributions $u$ and $v$ as 
\begin{equation*}
T_u v= \sum\limits_{q \geq 1} \sum_{p \leq q-2} \Delta_p u \Delta_q v = \sum\limits_{q\geq 1} S_{q-1}u\Delta_q v.
\end{equation*} 
Let us define also
 \begin{equation*}
R(u,v)=\sum_{\substack{q \geq -1 \\ i\in\{0,\pm 1\}} }\Delta_{q} u \Delta_{q+i} v = \sum\limits_{q \geq -1} \Delta_q u \tilde{\Delta}_q v, \quad \tilde{\Delta}_q := \Delta_{q-1} + \Delta_q + \Delta_{q+1}.
\end{equation*} With this we can (formally) decompose a product $uv$ with $u$, $v$ $\in \mathcal S'(\R^n_x)$ by \begin{equation*}
uv = T_u v + T_v u + R(u,v).
\end{equation*} 

\begin{prop} Let $a \in L^\infty(\R^n_x)$, $s \in \R$. Then the operator $T_a$ maps $H^s(\R_x^n)$ continuously into $H^s(\R_x^n)$, i.e. there exist a constant $C_s > 0$ such that 
\begin{equation*}
\|T_a u\|_{H^s(\R_x^n)} \leq C_s \|a\|_{L^ \infty(\R^n_x)} \|u\|_{H^s(\R_x^n)}.
\end{equation*} \end{prop}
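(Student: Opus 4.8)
The plan is to establish the continuity of the paraproduct operator $T_a$ on $H^s$ by a dyadic block analysis based on Propositions \ref{SobolevLP} and \ref{RevSobolevLP}. First I would write $T_a u = \sum_{q\geq 1} S_{q-1}a\,\Delta_q u$ and set $w_q := S_{q-1}a\,\Delta_q u$; the key spectral observation is that since $\spec(S_{q-1}a)\subseteq\{|\xi|\leq \tfrac43 2^{q-1}\}$ and $\spec(\Delta_q u)\subseteq\{\tfrac34 2^q\leq|\xi|\leq \tfrac83 2^q\}$, the product $w_q$ has spectrum contained in an annulus $\{R^{-1}2^q\leq|\xi|\leq R\,2^q\}$ for a fixed $R>1$ (concretely one may take $R$ slightly bigger than $\tfrac83+\tfrac23$). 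This is exactly the hypothesis (i) of Proposition \ref{RevSobolevLP}, so it remains only to control the $\ell^2$ norm of $\delta_q := 2^{qs}\|w_q\|_{L^2(\R^n_x)}$.

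Next I would estimate each block: by Hölder (pulling $L^\infty$ out),
\begin{equation*}
\|w_q\|_{L^2(\R^n_x)} = \|S_{q-1}a\,\Delta_q u\|_{L^2(\R^n_x)} \leq \|S_{q-1}a\|_{L^\infty(\R^n_x)}\,\|\Delta_q u\|_{L^2(\R^n_x)} \leq C\|a\|_{L^\infty(\R^n_x)}\,\|\Delta_q u\|_{L^2(\R^n_x)},
\end{equation*}
where the last inequality uses that $S_{q-1} = \chi(2^{-(q-1)}D_x)$ is given by convolution with an $L^1$-normalized kernel, hence is bounded on $L^\infty$ uniformly in $q$. Multiplying by $2^{qs}$ gives $\delta_q \leq C\|a\|_{L^\infty(\R^n_x)}\,2^{qs}\|\Delta_q u\|_{L^2(\R^n_x)}$, and the sequence on the right is in $\ell^2(\Z_{\geq -1})$ with norm comparable to $\|u\|_{H^s(\R^n_x)}$ by Proposition \ref{SobolevLP}. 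Hence $(\delta_q)\in\ell^2$ with $\|(\delta_q)\|_{\ell^2}\leq C_s\|a\|_{L^\infty(\R^n_x)}\|u\|_{H^s(\R^n_x)}$. Applying Proposition \ref{RevSobolevLP} to the sequence $(w_q)$ (reindexed so that it fits the $q\geq -1$ convention, the first few blocks being zero or absorbed harmlessly) yields $T_a u = \sum_q w_q \in H^s(\R^n_x)$ together with the bound $\|T_a u\|_{H^s(\R^n_x)}\leq C_s\|(\delta_q)\|_{\ell^2}\leq C_s\|a\|_{L^\infty(\R^n_x)}\|u\|_{H^s(\R^n_x)}$, which is the claim.

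The only genuinely delicate point is the bookkeeping at the low-frequency end: the sum defining $T_a v$ starts at $q\geq 1$, so the blocks $w_{-1}, w_0$ are not present, and one must check that Proposition \ref{RevSobolevLP} still applies after shifting indices — this is routine since omitting finitely many blocks only improves the $\ell^2$ bound and the spectral localization of the remaining blocks is unaffected. One should also note that for $s\leq 0$ one uses the full annular-support hypothesis (i) of Proposition \ref{RevSobolevLP}, which is available here, so no sign restriction on $s$ is needed; for $s>0$ the weaker ball-support version would also suffice. No other obstacle arises: the argument is a textbook paraproduct estimate, and the novelty in this paper lies elsewhere (in the commutator and remainder estimates of Section \ref{remainderest} and the Carleman estimate of Section \ref{SecCarleman}).
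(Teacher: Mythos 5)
Your proof is correct and complete. The paper itself does not give a proof of this proposition --- it simply cites M\'etivier \cite[Prop.~5.2.1]{Metivier} --- so your argument fills in what the paper leaves as a reference, and it is the standard spectral-localization proof that one finds in M\'etivier and in the paraproduct literature generally.

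One small numerical slip worth fixing: you write that one may take ``$R$ slightly bigger than $\frac{8}{3}+\frac{2}{3}=\frac{10}{3}$,'' but this only accounts for the \emph{outer} radius of the annulus. With $\supp(\chi(2^{-(q-1)}\cdot))\subseteq\{|\xi|\leq\frac{2}{3}2^q\}$ and $\supp(\varphi(2^{-q}\cdot))\subseteq\{\frac{3}{4}2^q\leq|\xi|\leq\frac{8}{3}2^q\}$, the Minkowski sum gives
\begin{equation*}
\spec(w_q)\subseteq\Big\{\tfrac{1}{12}2^q\leq|\xi|\leq\tfrac{10}{3}2^q\Big\},
\end{equation*}
so the \emph{inner} radius constraint $R^{-1}2^q\leq\frac{1}{12}2^q$ forces $R\geq 12$; the choice $R=12$ then also covers the outer radius since $2R=24\geq\frac{10}{3}$. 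This does not affect the validity of your argument --- what matters is that some fixed $R>1$ works --- but the value you suggest would fail Proposition~\ref{RevSobolevLP}(i). Everything else, in particular the uniform $L^\infty$ bound on $S_{q-1}$ via the $L^1$-normalized convolution kernel and the bookkeeping at $q=-1,0$, is correct.
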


The proof of this proposition can be found in \cite[Prop. 5.2.1]{Metivier}. Other mapping properties, especially of the remainder $R(u,v)$, will be proved in Section \ref{remainderest}. \\

Let now $a$ and $b$ be tempered distributions sufficiently regular such that $ab$ makes sense. Then we have \begin{equation*}
\Delta_q(ab)=\Delta_q T_a b+\Delta_q T_b a+\Delta_q R(a,b)=\Delta_q T_a b+\Delta_q \tilde R(a,b),
\end{equation*} where \begin{equation} \label{eq:Rem}
\tilde R(a,b)=T_b a +R(a,b)=\sum_{q' \geq -1} S_{q'+2} b \Delta_{q'} a.
\end{equation}
From the definition of $\Delta_q$ and $S_q$ it is easy to verify that 
\begin{equation}\label{supports}
\Delta_q(S_{q'-1}a\Delta_{q'}b)=0 \quad {\text{ if}} \quad |q'-q|\geq 5,
\end{equation}
and similarly 
\begin{equation}\label{supports2}
\Delta_q(S_{q'+2}a\Delta_{q'}b)=0 \quad{\text{ if}},\quad q'\leq q-4,
\end{equation}
so that

\begin{eqnarray*} 
 \Delta_q (ab) &=&\sum\limits_{|q'-q|\le 4}\Delta_q (S_{q'-1}a\Delta_{q'}b)+\sum\limits_{q'> q-4}\Delta_q(S_{q'+2}b\Delta_{q'}a)\\
&=&\sum\limits_{|q'-q|\le 4}[\Delta_q,S_{q'-1}a]\Delta_{q'}b+\sum\limits_{|q'-q|\le 4}S_{q'-1}a\Delta_q\Delta_{q'}b \\
&& \qquad +\sum\limits_{q'>q-4}\Delta_q(S_{q'+2}b\Delta_{q'}a)\\
&=&\sum\limits_{|q'-q|\le 4}[\Delta_{q},S_{q'-1} a]\Delta_{q'}b+\sum\limits_{|q'-q|\le 4}(S_{q'-1} a-S_{q-1}a)\Delta_q\Delta_{q'}b \\
&& \qquad +\sum\limits_{q'> q-4}\Delta_q (S_{q'+2} b\Delta_{q'}a)+\underbrace{\sum\limits_{|q'-q|\le 4} S_{q-1}a \Delta_q\Delta_{q'} b }_{= S_{q-1}a \Delta_q b}.
\end{eqnarray*} 
Consequently, \begin{equation}\label{Decomposition}
  \Delta_q (ab) = S_{q-1}a \Delta_q b + \mathcal R_q(a,b),
\end{equation} 
where
 \begin{eqnarray*}
&& \mathcal R_q(a,b) = \sum\limits_{|q'-q|\le 4}[\Delta_{q},S_{q'-1} a]\Delta_{q'}b+\sum\limits_{|q'-q|\le 4}(S_{q'-1} a-S_{q-1}a)\Delta_q\Delta_{q'}b \\
&& \qquad \qquad \qquad + \sum\limits_{q'> q-4}\Delta_q (S_{q'+2} b\Delta_{q'}a) = \mathcal R_q^{(1)}(a,b) + \mathcal R_q^{(2)}(a,b) + \mathcal R_q^{(3)}(a,b).
\end{eqnarray*}
Let us remark that a consequence of \eqref{Decomposition} is that
\begin{equation}
\label{specRq}
\spec R_q(a,b)\subseteq \{\xi\in\R^n_\xi\,:\, |\xi|\leq {10 \over 3}2^q\}.
\end{equation}

\subsection{Auxiliary estimates for $\mathcal R_q(a,b)$} \label{remainderest}

In this section we prove an estimate about $\mathcal R_q(a,b)$ which we will use in the sequel.

\begin{lem}\label{lemma}
Let $\omega$ be a modulus of continuity satisfying \eqref{Dini1},  $s\in \R$, $\Omega(q)$ as in \hbox{Definition \ref{WeightSobolev}}. Let $a \in C^\omega(\R_x^n)$ and $b \in H_\Omega^{-s}(\R_x^n)$. Then 
 \begin{equation}\label{estimateR}
\Big( \sum_{q \geq -1} 2^{2(1-s)q} \left\|\mathcal R_q^{(i)}(a,b)\right\|^2_{L^2(\R_x^n)}\Big)^{\frac{1}{2}} \leq C_{s,i }\|a\|_{C^\omega(\R_x^n)}\|b\|_{H^{-s}_{\Omega}(\R_x^n)}
, \quad i=1,\ 2.
\end{equation} 
Suppose moreover that $s\in (0,1)$ and $\omega$ satisfies \eqref{TechCond1} and \eqref{TechCond2}.
Then the estimate \eqref{estimateR} holds also for $i=3$.
\end{lem}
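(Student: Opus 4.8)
The plan is to estimate the three pieces $\mathcal R_q^{(1)}$, $\mathcal R_q^{(2)}$, $\mathcal R_q^{(3)}$ separately, in each case producing a bound of the schematic form $2^{(1-s)q}\|\mathcal R_q^{(i)}(a,b)\|_{L^2} \le C\|a\|_{C^\omega} \, c_q$, where $(c_q)_{q\geq-1}$ is an $\ell^2$ sequence whose norm is controlled by $\|b\|_{H^{-s}_\Omega}$; summing in $q$ then gives \eqref{estimateR}. Throughout I will use $\|\Delta_{q'}b\|_{L^2}\le 2^{sq'}\Omega(q')^{-1}\delta_{q'}$ with $\delta_{q'}:=2^{-sq'}\Omega(q')\|\Delta_{q'}b\|_{L^2}\in\ell^2$, which is just the definition of $H^{-s}_\Omega$, and the fact that $\Omega(q)=2^q\omega(2^{-q})$, so $2^{-q}/\Omega(q)^{-1}=\omega(2^{-q})$ etc.; I'll also freely use the doubling property \eqref{duplicate} to absorb shifts $|q'-q|\le 4$ at the cost of constants.

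For $\mathcal R_q^{(1)}=\sum_{|q'-q|\le 4}[\Delta_q,S_{q'-1}a]\Delta_{q'}b$, I would apply the commutator estimate \eqref{CM} of Proposition \ref{BernsteinComm}(ii) with $p=q'$ to get $\|[S_{q'-1}a,\Delta_q]\Delta_{q'}b\|_{L^2}\le C2^{-q'}\|\nabla S_{q'-1}a\|_{L^\infty}\|\Delta_{q'}b\|_{L^2}$; then \eqref{nablaS} of Proposition \ref{PropMLP} gives $\|\nabla S_{q'-1}a\|_{L^\infty}\le C2^{q'}\omega(2^{-q'})\|a\|_{C^\omega}$, so the commutator is bounded by $C\omega(2^{-q'})\|a\|_{C^\omega}\|\Delta_{q'}b\|_{L^2}$. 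Multiplying by $2^{(1-s)q}\approx 2^{(1-s)q'}$ and writing $\|\Delta_{q'}b\|_{L^2}=2^{sq'}\Omega(q')^{-1}\delta_{q'}=2^{sq'}2^{-q'}\omega(2^{-q'})^{-1}\delta_{q'}$, the $\omega(2^{-q'})$ factors cancel and I am left with $C\|a\|_{C^\omega}\delta_{q'}$; since the sum over $|q'-q|\le 4$ is finite, summing the squares over $q$ yields the $\ell^2$ bound. For $\mathcal R_q^{(2)}=\sum_{|q'-q|\le 4}(S_{q'-1}a-S_{q-1}a)\Delta_q\Delta_{q'}b$, the telescoping difference $S_{q'-1}a-S_{q-1}a$ is a finite sum of at most $|q'-q|\le 8$ blocks $\Delta_p a$, each bounded by $\|\Delta_p a\|_{L^\infty}\le C\|a\|_{C^\omega}\omega(2^{-p})\le C\|a\|_{C^\omega}\omega(2^{-q})$ (using monotonicity of $\omega$ and that $p$ is comparable to $q$); then $\|\mathcal R_q^{(2)}\|_{L^2}\le C\|a\|_{C^\omega}\omega(2^{-q})\|\Delta_{q'}b\|_{L^2}$ and the same cancellation as before finishes the estimate.

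The piece $\mathcal R_q^{(3)}=\sum_{q'>q-4}\Delta_q(S_{q'+2}b\,\Delta_{q'}a)$ is the hard one — this is where the high-high interaction lives and where the technical hypotheses \eqref{TechCond1}, \eqref{TechCond2} and the restriction $s\in(0,1)$ enter. Here I would bound $\|\Delta_q(S_{q'+2}b\,\Delta_{q'}a)\|_{L^2}\le \|S_{q'+2}b\|_{L^2}\|\Delta_{q'}a\|_{L^\infty}$, using $\|\Delta_{q'}a\|_{L^\infty}\le C\|a\|_{C^\omega}\omega(2^{-q'})$ from \eqref{Deltaa} and $\|S_{q'+2}b\|_{L^2}\le\sum_{p\le q'+1}\|\Delta_p b\|_{L^2}\le\sum_{p\le q'+1}2^{sp}2^{-p}\omega(2^{-p})^{-1}\delta_p$; because $s<1$, the factor $2^{(s-1)p}\omega(2^{-p})^{-1}$ is not summable by itself, so one must keep the $\delta_p$ and use Cauchy–Schwarz carefully. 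Multiplying by $2^{(1-s)q}$ and summing over $q'>q-4$, I expect to arrive at a double sum $\sum_q 2^{(1-s)q}\sum_{q'>q-4}\omega(2^{-q'})\sum_{p\le q'}2^{(s-1)p}\omega(2^{-p})^{-1}\delta_p$, which I would treat as a (two-fold) discrete convolution / Schur-test: \eqref{TechCond1} is exactly the ratio bound needed to sum the inner $\omega$-quotients geometrically in $q'$ (turning the sum over $p\le q'$ into something like $\omega(2^{-q'})^{-1}$ times a tail of $\delta_p$), and \eqref{TechCond2} guarantees that the resulting coefficients $2^{(1-s)k}\omega(2^{-k})$ are summable so that Young's inequality for $\ell^2*\ell^1$ applies and gives $\|(c_q)\|_{\ell^2}\le C\|a\|_{C^\omega}\|(\delta_p)\|_{\ell^2}$. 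The main obstacle, and the part requiring genuine care rather than bookkeeping, is organizing this last double-sum estimate so that the non-summable-looking factors $2^{(s-1)p}\omega(2^{-p})^{-1}$ are always paired against a geometric gain coming from \eqref{TechCond1} before Cauchy–Schwarz is applied; getting the order of summation and the Schur weights right is the crux.
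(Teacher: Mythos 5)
Your treatment of $\mathcal R_q^{(1)}$ and $\mathcal R_q^{(2)}$ coincides with the paper's: commutator estimate plus \eqref{nablaS}/\eqref{charComega} for the first, the observation that $S_{q'-1}a-S_{q-1}a$ collapses to one or two dyadic blocks bounded via \eqref{Deltaa} for the second, with the factors $\omega(2^{-q'})$ and $\Omega(q')^{-1}2^{(1-s)q'}$ cancelling against each other. The difference is in $\mathcal R_q^{(3)}$. The paper does not estimate the raw sum over $q'>q-4$: it first rewrites $S_{q'+2}b=S_{q'-1}b+\Delta_{q'-1}b+\Delta_{q'}b+\Delta_{q'+1}b$, so that the first piece reduces, by the support relation \eqref{supports}, to a \emph{finite} sum of terms $\Delta_q(S_{q'-1}b\,\Delta_{q'}a)$ with $|q'-q|\le 4$ (which are then convolved against $\Omega(q)/\Omega(p)$ exactly as you indicate, via \eqref{TechCond1}--\eqref{TechCond2} and Young), while the second piece $\sum_{q'}\Delta_q(\Delta_{q'}b\,\tilde\Delta_{q'}a)$ is a genuine remainder whose spectrum allows Proposition \ref{RevSobolevLP} to be applied directly, with no $\omega$-quotient at all. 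You instead keep the whole sum over $q'>q-4$ and propose to treat the double sum as a discrete convolution. This route can be made to work: exchanging the $q'$ and $p$ summations one gets a kernel
\[
K(q,p)=2^{(1-s)(q-p)}\,\omega(2^{-p})^{-1}\!\!\sum_{q'\ge\max(q-3,\,p-1)}\!\omega(2^{-q'}),
\]
and using \eqref{Dini1} to collapse the tail $\sum_{q'\ge M}\omega(2^{-q'})\lesssim\omega(2^{-M})$, together with \eqref{duplicate} and \eqref{TechCond1} when $p\le q-3$, one finds $K(q,p)\lesssim\kappa(q-p)$ with $\kappa(m)=2^{(1-s)m}\omega(2^{-m})$ for $m\ge 3$ and $\kappa(m)=2^{(1-s)m}$ for $m\le 2$, which lies in $\ell^1$ thanks to \eqref{TechCond2} and $s<1$; Young's inequality then closes the argument. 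So the ingredients you list are the right ones and your unified Schur-test route is a legitimate alternative to the paper's two-piece splitting — arguably cleaner because it never invokes Proposition \ref{RevSobolevLP}. However, you stop short of performing precisely this exchange-of-sums and verification that the resulting kernel is $\ell^1$, and you explicitly flag this as an unresolved "crux." One caution worth making explicit: if instead of keeping the $\delta_p$'s you estimate $\|S_{q'+2}b\|_{L^2}$ by Cauchy--Schwarz at this stage, the $q'$-sum no longer closes in $\ell^2_q$ (one gets a $q$-independent bound, not square-summable), so the order you announce — interchange first, Young last — is not optional but mandatory. Carrying out that computation is what is missing for the proposal to be a complete proof of the $i=3$ case.
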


\begin{proof}
Let us start with the inequality \eqref{estimateR}, for $i=1$. We have
\begin{eqnarray}\label{sumR}
\mathcal R_q^{(1)}(a,b) &=& \sum\limits_{|q'-q|\leq 4} [\Delta_q, S_{q'-1}a]\Delta_{q'}b \nonumber \\
&=& [\Delta_q, S_{q-5}a]\Delta_{q-4}b+[\Delta_q, S_{q-4}a]\Delta_{q-3}b+\dots+[\Delta_q, S_{q+3}a]\Delta_{q+4}b. 
\end{eqnarray}
Consider the first term of this sum. We have, from \eqref{CM} and \eqref{charComega},
\begin{eqnarray*}
\| [\Delta_q, S_{q-5}a]\Delta_{q-4}b\|_{L^2(\R_x^n)} &\leq& C2^{-(q-4)}\|\nabla_x S_{q-5} a\|_{L^\infty(\R_x^n)} \|\Delta_{q-4} b\|_{L^2(\R_x^n)}   \\
&\leq& {C\over 2}\omega(2^{-(q-5)})\|a\|_{C^\omega(\R^n_x)} \|\Delta_q b \|_{L^2(\R_x^n)}.
\end{eqnarray*} 
Since $u\in H_\Omega^{-s}(\R^n_x)$ we have that
$$
\|\Delta_{q-4} u\|_{L^2(\R^n_x)}\leq {2^{s(q-4)}\over \Omega(q-4)}\eps_q = {2^{(s-1)(q-4)}\over \omega(2^{-(q-4)})}\eps_q,
$$
where $(\eps_q)_{q \in \Z_{\geq -1}}$ is a sequence in $l^2(\Z_{\geq -1})$ and there exists $c_s\geq 1$ such that
\begin{equation}\label{equivnorm}
{1\over c_s}\, \|b\|_{H_\Omega^{-s}(\R^n_x)}\leq \|(\eps_q)\|_{l^2(\Z_{\geq -1})}\leq c_s\, \|b\|_{H_\Omega^{-s}(\R^n_x)}.
\end{equation}
We get 
\begin{equation}\label{estimateR'}
2^{(1-s)q}\| [\Delta_q, S_{q-5}a]\Delta_{q-4}b\|_{L^2(\R_x^n)} \leq C 2^{3-4s}{\omega(2^{-(q-5)})\over \omega(2^{-(q-4)})}\|a\|_{C^\omega(\R_x^n)}  \eps_{q-4} \leq C_s\|a\|_{C^\omega(\R_x^n)} \eps_{q-4}.
\end{equation} 
For all the other terms in \eqref{sumR} we obtain  an  estimate similar to \eqref{estimateR'} and the inequality \eqref{estimateR} follows. 

Let us now consider the inequality \eqref{estimateR}, for $i=2$.
We have \begin{equation*}
\|\mathcal R_q^{(2)}(a,b)\|_{L^2(\R_x^n)} = \left\|(S_{q-2}-S_{q-1})a \Delta_q\Delta_{q-1} b + (S_{q}-S_{q-1})a \Delta_q\Delta_{q+1}b \right\|_{L^2(\R_x^n)}.
\end{equation*} 
Since $S_{q-2}-S_{q-1} = -\Delta_{q-2}$ and $S_{q}-S_{q-1}= \Delta_{q-1}$,  we deduce from \eqref{Deltaa}, 
\begin{eqnarray*}
\|\mathcal R_q^{(2)}(a,b)\|_{L^2(\R_x^n)} &\leq&  (\|\Delta_{q-2}a\|_{L^\infty(\R_x^n)}+\|\Delta_{q-1}a\|_{L^\infty(\R_x^n)})\|\Delta_q b\|_{L^2(\R_x^n)} \\
&\leq& 2\,C \|a\|_{C^\omega(\R_x^n)} \omega(2^{-q}) \frac{2^{sq}}{\Omega(q)}\eps_q,
\end{eqnarray*} 
where we have used the fact that  $\|\Delta_q b\|_{L^2(\R^n_x)} \leq \frac{2^{qs}}{\Omega(q)} \eps_q$, where $(\eps_q)_{q \in \Z_{\geq -1}} $ is a sequence in $ l^2(\Z_{\geq -1})$
satisfying \eqref{equivnorm}.
Therefore, remembering that $\Omega(q)=2^q\omega(2^{-q})$, we get \begin{eqnarray*}
2^{(1-s)q}\|\mathcal R_q^{(2)}(a,b)\|_{L^2(\R_x^n)} \leq 2\,C\|a\|_{C^\omega(\R_x^n)} \eps_q.
\end{eqnarray*} 
Thus, inequality  \eqref{estimateR}, for $i=2$, follows. 
Let now $s\in (0,1)$. We have \begin{eqnarray*}
R_q^{(3)}(a,b) &=& \sum\limits_{q' > q-4} \Delta_q(S_{q'+2}b \Delta_{q'}a) \\
&=& \sum\limits_{q' > q-4} \Big( \Delta_q(S_{q'-1}b \Delta_{q'}a) +\Delta_q\big(\Delta_{q'-1}b \Delta_{q'}a + \Delta_{q'}b \Delta_{q'}a +\Delta_{q'+1}b \Delta_{q'}a\big) \Big).
\end{eqnarray*}
From \eqref{supports}  and \eqref{supports2} we obtain 
\begin{equation} 
\begin{array}{ll}
\displaystyle{R_q^{(3)}(a,b)}= \displaystyle{ \Delta_q\big( S_{q-4}b\Delta_{q-3} a + \dots + S_{q+4}b\Delta_{q+5}a  \big)} \\[0.3cm]
 \displaystyle{\qquad\qquad\qquad \qquad+ \sum\limits_{q'\geq -1} \Big( \Delta_q\big(\Delta_{q'-1}b \Delta_{q'}a + \Delta_{q'}b \Delta_{q'}a +\Delta_{q'+1}b \Delta_{q'}a\big) \Big).}
\end{array} \label{Calc1}
\end{equation}
The nine terms in the first line in \eqref{Calc1} are essentially of the form $\Delta_q (S_{q-1}b\Delta_q a)$ and can be treated as follows: 
\begin{eqnarray*}
\sum\limits_{q \geq -1} 2^{2(1-s)q}\|\Delta_q (S_{q-1}b\Delta_q a)  \|_{L^2(\R^n_x)}^2 &\leq& \sum\limits_{q \geq -1} 2^{2(1-s)q}\|S_{q-1}b\Delta_q a \|_{L^2(\R^n_x)}^2 \\
&\leq& \sum\limits_{q \geq -1} 2^{2(1-s)q} \|S_{q-1}b \|_{L^2(\R^n_x)}^2 \|\Delta_q a \|_{L^\infty(\R^n_x)}^2 \\
&\leq& \sum\limits_{q \geq -1} 2^{2(1-s)q} \big( \sum\limits_{p \leq q-2} \|\Delta_p b\|_{L^2(\R^n_x)} \big)^2 \|\Delta_q a \|_{L^\infty(\R^n_x)}^2 \\
&\leq& \sum\limits_{q \geq -1} 2^{2(1-s)q} \Big( \sum\limits_{p \leq q-2} \frac{2^{ps}}{\Omega(p)} \eps_p\Big)^2 2^{-2q} \Omega^2(q) \|a\|_{C^\omega(\R^n_x)}^2 \\
&\leq& \sum\limits_{q \geq -1} \Big( \sum\limits_{p \leq q-2} 2^{-s(q-p)}\frac{\Omega(q)}{\Omega(p)} \eps_p \Big)^2 \|a\|_{C^\omega(\R^n_x)}^2,
\end{eqnarray*} 
where $(\eps_j)_{j \in \Z_{\geq -1}}$ is a sequence in $l^2(\Z_{\geq -1})$ with \eqref{equivnorm}.
From \eqref{TechCond1} and the definition of $\Omega$ we get
\begin{equation*}
\tilde{\eps}_q := \sum\limits_{p \leq q-2} 2^{-s(q-p)}\frac{\Omega(q)}{\Omega(p)} \eps_p\leq \sum\limits_{p \leq q-2} 2^{(1-s)(q-p)}\omega(2^{-(q-p)})\eps_p.
\end{equation*}
Then \eqref{TechCond2} and the Young inequality for convolution in $l^p$ spaces give that the sequence $(\tilde \eps_j)_{j \in \Z_{\geq -1}}$ is in $l^2(\Z_{\geq -1})$ 
and there exists $C_s>0$ such that \begin{equation*}
\|(\tilde \eps_j)\|_{l^2(\Z_{\geq -1})}\leq \tilde C_s \|(\eps_j)\|_{l^2(\Z_{\geq -1})}.
\end{equation*}
From \eqref{equivnorm} we conclude that \begin{equation*}
\sum\limits_{q \geq -1} 2^{2(1-s)q}\|\Delta_q (S_{q-1}b\Delta_q a)  \|_{L^2(\R_x^n)}^2 \leq
\tilde C^2_s \|(\eps_j)\|^2_{l^2(\Z_{\geq -1})} \|a\|_{C^\omega(\R^n_x)}^2 \leq C^2_s \|b\|^2_{H^{-s}_{\Omega}(\R_x^n)} \|a\|_{C^\omega(\R_x^n)}^2.
\end{equation*}
The second line of \eqref{Calc1} is a sum of three terms of the form $\sum_{q'\geq -1}\Delta_q\big(\Delta_{q'} b\Delta_{q'} a\big)$. 
We have \begin{equation*}
\sum_{q\geq -1} 2^{2(1-s)q}\|\sum_{q'\geq -1}\Delta_q\big(\Delta_{q'} b\Delta_{q' }a\big)\|^2_{L^2(\R_x^n)}= 
\sum_{q\geq -1} 2^{2(1-s)q}\|\Delta_q\big(\sum_{q'\geq -1}\Delta_{q' }b\Delta_{q'} a\big)\|^2_{L^2(\R_x^n)}.
\end{equation*}
Thanks to the result of Proposition \ref{SobolevLP}, this last quantity coincides with $\|\sum_{q'\geq -1}\Delta_{q' }b\Delta_{q'} a\|^2_{H^{1-s}(\R_x^n)}$. To compute the 
$H^{1-s}(\R_x^n)$ of $\sum_{q'\geq -1}\Delta_{q' }b\Delta_{q'} a$ we use Proposition \ref{RevSobolevLP}. In fact $1-s>0$, \begin{equation*}
\spec (\Delta_{q' }b\Delta_{q'} a)\subseteq
\{\xi\in\R^n_\xi\,:\, |\xi|\leq {16\over 3}2^{q'}\},
\end{equation*} and \begin{equation*}
2^{(1-s)q'}\|\Delta_{q' }b\Delta_{q'} a\|_{L^2(\R_x^n)}\leq 2^{(1-s)q'}\|\Delta_{q' }b\|_{L^2(\R_x^n)}\|\Delta_{q'} a\|_{L^\infty(\R_x^n)}
\leq \eps_{q'} \|a\|_{C^\omega(\R_x^n)}.
\end{equation*}
Again \eqref{equivnorm} gives $\|\sum_{q'\geq -1}\Delta_{q' }b\Delta_{q'} a\|^2_{H^{1-s}(\R_x^n)}\leq C_s 
 \|b\|^2_{H^{-s}_{\Omega}(\R_x^n)} \|a\|_{C^\omega(\R_x^n)}^2.$
The proof of the lemma is concluded.
\end{proof}

\section{The Carleman estimate} \label{SecCarleman}

\subsection{The weight function}

The idea of constructing a weight function which is linked to the modulus of continuity is due to Tarama (\cite{Tarama}, see also \cite{DSP, DS2012, DSP2012}).
Let $\mu$ be a modulus of continuity satisfying \eqref{Osgood}. We set
\begin{equation*}
\varphi(t) := \int\limits_\frac{1}{t}^1 \frac{1}{\mu(s)} ds.
\end{equation*} 
The function $\varphi$ is strictly increasing and $C^1([1,+\infty[)$. We have $\varphi([1,+\infty)) = [0,+\infty)$ and $\varphi'(t)= 1/(t^2\mu(1/t))>0$ for all $t \in [1,+\infty)$. 
We define 
\begin{equation}\label{Phi}
\Phi(\tau) := \int\limits_0^\tau \varphi^{-1}(s) ds.
\end{equation}
From this we get $\Phi'(t) = \varphi^{-1}(t)$ 
and therefore 
$\lim_{\tau \rightarrow +\infty} \Phi'(\tau) = +\infty$. 
Moreover we have 
\begin{equation} \label{ODE}
\Phi''(\tau) = \left(\Phi'(\tau)\right)^2 \mu\left( \frac{1}{\Phi'(\tau)} \right)
\end{equation} 
for all $\tau \in [0,+\infty)$ 
and, since the function $\sigma \mapsto \sigma \mu(1/\sigma)$ is increasing on the interval $[1,+\infty)$, we obtain that 
\begin{equation*}
\lim\limits_{\tau \rightarrow +\infty} \Phi''(\tau) = \lim\limits_{\tau \rightarrow +\infty} \left(\Phi'(\tau)\right)^2 \mu\left( \frac{1}{\Phi'(\tau)} \right) = +\infty.
\end{equation*}

\subsection{The Carleman estimate}

The uniqueness  result of Theorem \ref{Uniqueness} will be a consequence of  the following Carleman estimate.
\begin{prop} \label{Carleman} 
Let $\mu$ and $\omega$ be two moduli of continuity such that $\omega(s)= \sqrt{\mu(s^2)}$.
Suppose that $\mu$ and $\omega$ satisfy \eqref{Osgood} and \eqref{Dini1}, \eqref{TechCond1}, \eqref{TechCond2} respectively.
Suppose that, for all $j,k = 1, \dots, n$,
\begin{equation*}
a_{jk} \in C^\mu([0,T],L^\infty(\R_x^n)) \cap L^\infty([0,T],C^\omega(\R_x^n)),
\end{equation*}
 and let \eqref{elliptic} hold. Let $\Phi$   and $H_\Omega^s (\R^n_x)$ defined in \eqref{Phi} and Definition \ref {WeightSobolev} respectively. Let $s\in (0,1)$.
Then there exist $\gamma_0 \geq 1$, $C>0$, such that, for all $\gamma \geq \gamma_0$ and all $u \in C^\infty_0(\R_t \times \R^n_x)$ with $\supp(u) \subseteq [0,T/2] \times \R^n_x$, \begin{eqnarray}
\nonumber && \int\limits_0^{T/2}e^{\frac 2\gamma\Phi(\gamma(T-t))}\big\|\partial_t u+\sum\limits_{j,k=1}^n \partial_{x_j}(a_{jk}(t,x)\partial_{x_k} u)\big\|_{H^{-s}(\R^n_x)}^2 dt \geq \\
\label{eq:CarlemanDSJP} &&  \qquad\qquad\qquad \qquad C\gamma^{1/4}\int\limits_0^{T/2}e^{\frac 2\gamma\Phi(\gamma (T-t))}\big(\|\nabla u\|^2_{H^{-s}_{\Omega}(\R^n_x)}+\gamma^{3/4}\|u\|^2_{L^2(\R^n_x)}\big) dt.
\end{eqnarray}
\end{prop}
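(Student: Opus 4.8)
The plan is to establish \eqref{eq:CarlemanDSJP} along the by-now classical route: remove the zero-order term, conjugate with the Tarama weight, localize in frequency by Littlewood--Paley, and run a dyadic energy estimate for the conjugated operator. The term $c(t,x)u$ is harmless, since $\|cu\|_{H^{-s}(\R^n_x)}\le\|cu\|_{L^2(\R^n_x)}\le C\|u\|_{L^2(\R^n_x)}$, so for $\gamma$ large it is absorbed by the term $C\gamma\int_0^{T/2}e^{\frac{2}{\gamma}\Phi(\gamma(T-t))}\|u\|_{L^2(\R^n_x)}^2\,dt$ on the right-hand side; thus one works with $P_0u:=\partial_tu+\sum_{j,k}\partial_{x_j}(a_{jk}\partial_{x_k}u)$. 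Put $\psi(t):=\tfrac1\gamma\Phi(\gamma(T-t))$ and $v:=e^{\psi}u$; since $u\in C_0^\infty(\R_t\times\R^n_x)$ with $\supp u\subseteq[0,T/2]\times\R^n_x$, the function $v$ is smooth and vanishes to infinite order both at $t=0$ and at $t=T/2$. As $\psi'(t)=-\Phi'(\gamma(T-t))$ and $e^{\psi}$ does not depend on $x$, one has $e^{\psi}P_0(e^{-\psi}v)=\partial_tv+\Phi'(\gamma(T-t))v+\sum_{j,k}\partial_{x_j}(a_{jk}\partial_{x_k}v)=:Lv$, and \eqref{eq:CarlemanDSJP} reduces to
\[
\int_0^{T/2}\|Lv\|_{H^{-s}(\R^n_x)}^2\,dt\ \ge\ C\gamma^{1/4}\int_0^{T/2}\Bigl(\|\nabla v\|_{H^{-s}_\Omega(\R^n_x)}^2+\gamma^{3/4}\|v\|_{L^2(\R^n_x)}^2\Bigr)\,dt.
\]

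Applying $\Delta_q$, commuting it with $\partial_{x_k}$ and using the paraproduct decomposition \eqref{Decomposition} gives
\[
\Delta_qLv=\partial_t\Delta_qv+\Phi'(\gamma(T-t))\Delta_qv+\sum_{j,k}\partial_{x_j}\bigl(S_{q-1}a_{jk}\,\partial_{x_k}\Delta_qv\bigr)+\sum_{j,k}\partial_{x_j}\mathcal R_q(a_{jk},\partial_{x_k}v).
\]
By the spectral localization \eqref{specRq}, applying $\partial_{x_j}$ to $\mathcal R_q$ costs only a factor $\lesssim 2^q$ in $L^2$, so Lemma \ref{lemma} gives, for each $t$,
\[
\sum_{q\ge-1}2^{-2sq}\bigl\|\partial_{x_j}\mathcal R_q(a_{jk},\partial_{x_k}v)\bigr\|_{L^2(\R^n_x)}^2\le C\|a_{jk}\|_{C^\omega(\R^n_x)}^2\,\|\nabla v\|_{H^{-s}_\Omega(\R^n_x)}^2 .
\]
This is exactly where the hypotheses \eqref{TechCond1}, \eqref{TechCond2} and the restriction $s\in(0,1)$ enter (with $s=0$ the remainder $\mathcal R_q^{(3)}$ cannot be controlled when $\omega$ lies below Lipschitz). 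Since this bound carries no power of $\gamma$, it will be absorbed once the diagonal part $\ell_qv:=\partial_t\Delta_qv+\Phi'(\gamma(T-t))\Delta_qv+\sum_{j,k}\partial_{x_j}(S_{q-1}a_{jk}\,\partial_{x_k}\Delta_qv)$ is shown to produce a term of order $\gamma^{1/4}\|\nabla v\|_{H^{-s}_\Omega}^2$; so it suffices to bound from below $\sum_{q\ge-1}2^{-2sq}\int_0^{T/2}\|\ell_qv\|_{L^2(\R^n_x)}^2\,dt$.

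The core is a dyadic Tarama-type energy estimate for $\ell_q$. The coefficient $S_{q-1}a_{jk}$ is Lipschitz in $x$, with $\|\nabla_xS_{q-1}a_{jk}\|_{L^\infty}\le C2^q\omega(2^{-q})$ by \eqref{nablaS}, but only $C^\mu$ in time; one regularizes it in $t$ at a scale $\lambda$ linked to $1/\Phi'(\gamma(T-t))$ and to $2^{-2q}$, obtaining $b^q_{jk}$ with $\|S_{q-1}a_{jk}-b^q_{jk}\|_{L^\infty}\le C\mu(\lambda)$ and $\|\partial_tb^q_{jk}\|_{L^\infty}\le C\mu(\lambda)/\lambda$, the matrix $(b^q_{jk})_{j,k}$ remaining uniformly elliptic for $q$ large by \eqref{elliptic} and Remark \ref{Pos}. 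The identity $\omega(s)=\sqrt{\mu(s^2)}$ is precisely the parabolic compatibility between these two regularizations: at the critical frequencies $2^{2q}\sim\Phi'(\gamma(T-t))$ the natural (parabolic) time-scale is $\lambda\sim 2^{-2q}$, whence $\mu(\lambda)=\mu(2^{-2q})=\omega(2^{-q})^2$, so that the errors generated by $S_{q-1}a_{jk}-b^q_{jk}$ and by $\partial_tb^q_{jk}$, once multiplied by the two spatial derivatives, are of the size $\Omega(q)^2\|\Delta_qv\|_{L^2}^2$ controlled by $\|\nabla v\|_{H^{-s}_\Omega}^2$. For the regularized operator one carries out the energy argument — pairing the equation for $\Delta_qv$ with $\overline{\Delta_qv}$ and with $\mathcal A_q\Delta_qv$, $\mathcal A_qw:=-\sum_{j,k}\partial_{x_j}(b^q_{jk}\partial_{x_k}w)$, forming the suitable $\Phi'$-weighted combination and integrating over $[0,T/2]$, where the boundary terms vanish by the flatness of $v$ — and the only dangerous contribution $\sum_{j,k}\int\partial_tb^q_{jk}\,\partial_{x_k}\Delta_qv\,\overline{\partial_{x_j}\Delta_qv}\,dx$, of size $\bigl(\mu(\lambda)/\lambda\bigr)\|\nabla\Delta_qv\|_{L^2}^2$, is absorbed — using \eqref{ODE} to match $\mu(\lambda)/\lambda$ against the weight — by the positive terms generated by $\Phi'$. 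Summing over $q$ and then absorbing the paraproduct remainders, the coefficient errors and the discarded term $cu$ yields \eqref{eq:CarlemanDSJP}; in this process the weight controls the low frequencies, the ellipticity the high ones, and the delicate balance at the transition is what leaves the gain $\gamma^{1/4}$.

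The step I expect to be the real obstacle is this last energy estimate: designing the frequency-localized energy so that low regularity in $t$ (modulus $\mu$) and low regularity in $x$ (modulus $\omega$) are handled simultaneously, and checking that every error term — the remainders $\mathcal R_q^{(i)}$, the commutator $S_{q-1}a_{jk}-b^q_{jk}$, and $\partial_tb^q_{jk}$ — lands in the norm $\|\nabla v\|_{H^{-s}_\Omega(\R^n_x)}^2$, or in a large multiple of $\|v\|_{L^2(\R^n_x)}^2$, with enough room in $\gamma$ to be absorbed. The assumptions \eqref{Dini1}, \eqref{TechCond1}, \eqref{TechCond2}, the relation $\omega(s)=\sqrt{\mu(s^2)}$ and the choice $s\in(0,1)$ are all consumed precisely in closing these estimates.
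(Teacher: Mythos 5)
Your proposal follows essentially the same route as the paper: conjugation with the Tarama weight, Littlewood--Paley localization, the paraproduct decomposition \eqref{Decomposition} with Lemma \ref{lemma} to absorb $\mathcal R_q$, time-regularization of the microlocalized coefficients at scale $\eps\sim 2^{-2q}$, and a dyadic energy estimate balanced through $\omega(s)=\sqrt{\mu(s^2)}$ and split between low and high frequencies according to whether $\Phi'(\gamma(T-t))$ dominates $2^{2q}$. (The only small misalignment is the opening remark about discarding $c(t,x)u$: the zero-order term never appears in \eqref{eq:CarlemanDSJP}, so there is nothing to remove at this stage.)
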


Setting 
\begin{equation*}
u(t,x) = e^{-\frac{1}{\gamma}\Phi(\gamma(T-t))}v(t,x),
\end{equation*} 
the Carleman estimate \eqref{eq:CarlemanDSJP} becomes 
\begin{eqnarray}
\nonumber && \int\limits_0^{T/2}\big\|\partial_t v + \sum\limits_{j,k=1}^n \partial_{x_j}(a_{jk}(t,x)\partial_{x_k} v) + \Phi'(\gamma(T-t))v \big\|_{H^{-s}(\R^n_x)}^2 dt \geq \\
\label{eq:CarlemanTrans} &&  \qquad\qquad\qquad \qquad \qquad \qquad \qquad C\gamma^{1/4}\int\limits_0^{T/2} \left( \|\nabla v\|^2_{H^{-s}_{\Omega}(\R^n_x)}+\gamma^{3/4}\|v\|^2_{L^2(\R^n_x)} \right) dt.
\end{eqnarray}
The proof of such inequality is divided in several steps which we will present in the subsequent subsections. 

\subsection{Regularization in $t$} \label{regularization}

In our proof of the Carleman estimate we need to perform some integrations by part with respect to $t$ and if the coefficients $a_{jk}$ are not sufficiently regular this is not possible. We will avoid this difficulty regularizing  the $a_{jk}$'s  with respect to $t$ and to this end we will use Friedrichs mollifiers. We take a $\rho \in C_0^\infty(\R)$ with $\supp(\rho) \subseteq [-\frac{1}{2},\frac{1}{2}]$ and $\int\limits_\R \rho(\tau) d\tau = 1$ and $\rho(\tau) = \rho(-\tau)$ and we define 
\begin{equation*}
a_{jk}^\eps(t,x) := \frac{1}{\eps} \int\limits_{\R^n} a(s,x) \rho\left( \frac{t-s}{\eps} \right) ds.
\end{equation*} 
 We have easily
 \begin{equation*}
|a_{jk}^\eps(t,x) - a_{jk}(t,x)| \leq C \mu(\eps)
\end{equation*} and \begin{equation*}
|\partial_t a_{jk}^\eps(t,x)| \leq C \frac{\mu(\eps)}{\eps}.
\end{equation*}
where $C$ depends only on $\|a_{j,k}\|_{C^\mu([0,T], L^\infty(\R^n_x))}$.
\subsection{Estimates for the microlocalized operator}

Using the characterization of Sobolev spaces given in Proposition \ref{SobolevLP} we have that the left hand side part of \eqref{eq:CarlemanTrans} reads
\begin{equation} 
\label{Dyadicdec}
\sum\limits_{q \geq -1} 2^{-2sq} \int\limits_0^{T/2} \Big\| \partial_t v_q + \sum\limits_{j,k=1}^n \partial_{x_j}\left(\Delta_q (a_{jk}(t,x)\partial_{x_k} v)\right) + \Phi'(\gamma(T-t))v_q \Big\|^2_{L^2(\R^n_x)} dt.
\end{equation} 
where we set $\Delta_q v := v_q$. 
We use formula \eqref{Decomposition} and we replace $a_{jk}(t,x)\partial_{x_k} v$ with $(S_{q-1}a_{jk}(t,x))\partial_{x_k}v_q + \mathcal R_q(a_{jk},\partial_{x_k}v)$. We deduce that \eqref{Dyadicdec} is bounded from below by
\begin{eqnarray*}
\sum\limits_{q \geq -1} 2^{-2sq-1} \int\limits_0^{T/2} \Big\| \partial_t v_q + \sum\limits_{j,k=1}^n \partial_{x_j}\left(S_{q-1}a_{jk}(t,x)\partial_{x_k}v_q\right) + \Phi'(\gamma(T-t))v_q \Big\|^2_{L^2(\R^n_x)} dt\qquad\qquad \\
- \sum\limits_{q \geq -1} 2^{-2sq} \int\limits_0^{T/2} \Big\|\partial_{x_j}\left( \mathcal R_q(a_{jk},\partial_{x_k}v)\right)\Big\|^2_{L^2(\R^n_x)}dt.
\end{eqnarray*}
We use now \eqref{specRq}, the Bernstein inequalities and the result of Lemma \ref{lemma} and we get
\begin{equation*}
\sum\limits_{q \geq -1} 2^{-2sq} \int\limits_0^{T/2} \Big\|\partial_{x_j}\left( \mathcal R_q(a_{jk},\partial_{x_k}v)\right)\Big\|^2_{L^2(\R^n_x)}dt
\leq C \int\limits_0^{T/2}  \|\nabla v\|^2_{H^{-s}_{\Omega}(\R^n_x)}  dt,
\end{equation*}
where $C$ depends only on $s$ and on $\max_{j,k}\|a_{j,k}\|_{L^\infty([0,T],C^\omega(\R^n_x))}$.
Finally, \eqref{eq:CarlemanTrans} will be a consequence of
\begin{eqnarray*}
\sum\limits_{q \geq -1} 2^{-2sq} \int\limits_0^{T/2} \Big\| \partial_t v_q + \sum\limits_{j,k=1}^n \partial_{x_j}\left(S_{q-1}a_{jk}(t,x)\partial_{x_k}v_q)\right) + \Phi'(\gamma(T-t))v_q \Big\|^2_{L^2(\R^n_x)} dt \geq\qquad\qquad\\
C\gamma^{1/4}\int\limits_0^{T/2} \left( \|\nabla v\|^2_{H^{-s}_{\Omega}(\R^n_x)}+\gamma^{3/4}\|v\|^2_{L^2(\R^n_x)} \right) dt.
\end{eqnarray*}
We have
\begin{eqnarray*}
&&\!\!\!\!\!\!\!\! \int\limits_0^{T/2} \Big\| \partial_t v_q + \sum\limits_{j,k=1}^n \partial_{x_j}\left(S_{q-1}a_{jk}(t,x)\partial_{x_k}v_q\right) + \Phi'(\gamma(T-t))v_q \Big\|^2_{L^2(\R^n_x)} dt \\
&&= \int\limits_0^{T/2} \|\partial_t v_q\|^2_{L^2(\R_x^n)} dt +\int\limits_0^{T/2} \Big\| \sum\limits_{j,k=1}^n \partial_{x_j}\left(S_{q-1}a_{jk}(t,x)\partial_{x_k}v_q \right) + \Phi'(\gamma(T-t))v_q \Big\|^2_{L^2(\R^n_x)} dt \\
 && \quad + 2\real\int\limits_0^{T/2} \la \partial_t v_q \,|\, \Phi'(\gamma(T-t))v_q \ra_{L^2(\R_x^n)} dt + 2\real \sum\limits_{j,k=1}^n \int\limits_0^{T/2} \la \partial_t v_q\, |\,\partial_{x_j}\left(S_{q-1}a_{jk}(t,x)\partial_{x_k}v_q \right) \ra dt .
 \end{eqnarray*} 
We compute by integration by parts \begin{eqnarray*}
2\real\int\limits_0^{T/2} \la \partial_t v_q \, |\, \Phi'(\gamma(T-t))v_q \ra_{L^2(\R_x^n)} dt = \gamma\int\limits_0^{T/2} \Phi''(\gamma(T-t))\|v_q\|_{L^2(\R^n_x)}^2 dt
\end{eqnarray*} To handle the second scalar product we use the regularization from Section \ref{regularization}. In particular
\begin{equation}
\label{approx1}  |S_qa_{jk}^\eps(t,x)-S_qa_{jk}(t,x)| \leq C\mu(\eps), \quad \text{for all}\quad (t,x) \in [0,T] \times \R_x^n
\end{equation}
and 
\begin{equation}
\label{approx2}  |\partial_t S_qa_{jk}^{\eps}(t,x)| \leq C\frac{\mu(\eps)}{\eps}, \quad \text{for all}\quad (t,x) \in [0,T] \times \R_x^n,
\end{equation} 
where $C$ depends only on $\max_{j,k}\|a_{j,k}\|_{C^\mu([0,T], L^\infty(\R^n_x))}$.
Adding and subtracting $\partial_{x_j}\left(S_{q-1}a_{jk}^\eps(t,x)\partial_{x_k}v_q\right)$ we get 
\begin{eqnarray}
\nonumber && 2\real \sum\limits_{j,k=1}^n \int\limits_0^{T/2} \la \partial_t v_q \, |\, \partial_{x_j}\left(S_{q-1}a_{jk}(t,x)\partial_{x_k}v_q\right) \ra_{L^2(\R^n_x)} dt \\
\nonumber && \qquad = 2\real \sum\limits_{j,k=1}^n \int\limits_0^{T/2} \la \partial_t v_q \, |\, \partial_{x_j}\left(S_{q-1}a_{jk}^\eps(t,x)\partial_{x_k}v_q \right) \ra_{L^2(\R_x^n)} dt \\
\label{term1} && \qquad \quad + 2\real \sum\limits_{j,k=1}^n \int\limits_0^{T/2} \la \partial_t v_q \, |\, \partial_{x_j}\left(S_{q-1}(a_{jk}(t,x)-a_{jk}^\eps(t,x))\partial_{x_k}v_q\right) \ra_{L^2(\R_x^n)} dt.
\end{eqnarray} 
By integration by parts we get 
\begin{eqnarray*}
&& 2\real \sum\limits_{j,k=1}^n \int\limits_0^{T/2} \la \partial_t v_q \, |\,\partial_{x_j}\left(S_{q-1}a_{jk}^\eps(t,x)\partial_{x_k}v_q\right) \ra_{L^2(\R_x^n)} dt \\
&& \qquad \qquad = \sum\limits_{j,k=1}^n \int\limits_0^{T/2} \la \partial_{x_j}v_q \,|\, \partial_t(S_{q-1}a_{jk}^\eps(t,x))\partial_{x_k}v_q \ra_{L^2(\R_x^n)}.
\end{eqnarray*}
From \eqref{approx2} we obtain 
\begin{eqnarray*}
&& \Big| 2\real \sum\limits_{j,k=1}^n \int\limits_0^{T/2} \la \partial_t v_q \, |\, \partial_{x_j}\left(S_{q-1}a_{jk}^\eps(t,x)\partial_{x_k}v_q\right) \ra_{L^2(\R_x^n)} dt \Big| \\
&& \qquad \leq \sum\limits_{j,k=1}^n \int\limits_0^{T/2} \|\partial_{x_j}v_q\|_{L^2(\R_x^n)}\|\partial_t(S_{q-1}a_{jk}^\eps(t,x))\partial_{x_k}v_q\|_{L^2(\R_x^n)} dt \\
&& \qquad\qquad \leq C_1 \frac{\mu(\eps)}{\eps} 2^{2q} \int\limits_0^{T/2} \|v_q\|_{L^2(\R_x^n)}^2 dt,
\end{eqnarray*} 
where we have used the fact that $\|\partial_{x_k}v_q\|_{L^2(\R_x^n)} \leq C 2^q \|v_q\|_{L^2(\R_x^n)}$ 
and 
\begin{eqnarray*}
\|\partial_t(S_{q-1}a_{jk}^\eps(t,x))\partial_{x_k}v_q\|_{L^2(\R_x^n)} &\leq& \|\partial_t(S_{q-1}a_{jk}^\eps(t,x))\|_{L^\infty(\R_x^n)} \|\partial_{x_k}v_q\|_{L^2(\R_x^n)} \\
&\leq& C 2^q \frac{\mu(\eps)}{\eps} \|v_q\|_{L^2(\R_x^n)}.
\end{eqnarray*} 
Remark that $C_1$ depends only on $\max_{j,k} \|a_{j,k}\|_{C^\mu([0,T], L^\infty(\R^n_x))}$.
For the second term in \eqref{term1} we perform one integration by parts in $x$ and the we use the Cauchy-Schwarz inequality. We get 
\begin{eqnarray*}
&& 2\real \sum\limits_{j,k=1}^n \int\limits_0^{T/2} \la \partial_t v_q \, |\, \partial_{x_j}\left(S_{q-1}(a_{jk}(t,x)-a_{jk}^\eps(t,x))\partial_{x_k}v_q\right) \ra_{L^2(\R_x^n)} dt \\
&& \qquad = -2\real \sum\limits_{j,k=1}^n \int\limits_0^{T/2} \la \partial_{x_j}\partial_t v_q \,|\, \left(S_{q-1}(a_{jk}(t,x)-a_{jk}^\eps(t,x)\right) \partial_{x_k}v_q \ra_{L^2(\R_x^n)} dt,
\end{eqnarray*} 
and then
\begin{eqnarray*}
&& \Big| 2\real \sum\limits_{j,k=1}^n \int\limits_0^{T/2} \la \partial_{x_j}\partial_t v_q \,|\, \left(S_{q-1}(a_{jk}(t,x)-a_{jk}^\eps(t,x)\right) \partial_{x_k}v_q \ra_{L^2(\R_x^n)} dt \Big| \\
&& \qquad \leq \sum\limits_{j,k=1}^n \int\limits_0^{T/2} \|\partial_{x_k}\partial_t v_q\|_{L^2(\R_x^n)} \|\left(S_{q-1}(a_{jk}(t,x)-a_{jk}^\eps(t,x)\right) \partial_{x_k}v_q\|_{L^2(\R_x^n)} dt \\
&& \qquad \leq C \sum\limits_{j,k=1}^n \int\limits_0^{T/2} 2^{2q}\|\partial_t v_q\|_{L^2(\R_x^n)} \mu(\eps)\|v_q\|_{L^2(\R_x^n)} dt \\
&& \qquad \leq \int\limits_0^{T/2} \|\partial_t v_q\|_{L^2(\R_x^n)}^2 dt + C_2 2^{4q} \mu(\eps) \int\limits_0^{T/2} \|v_q\|_{L^2(\R_x^n)}^2 dt,
\end{eqnarray*} where we used (see \eqref{approx1}) \begin{eqnarray*}
&&\|\left(S_{q-1}(a_{jk}(t,x)-a_{jk}^\eps(t,x)\right) \partial_{x_k}v_q\|_{L^2(\R_x^n)} \\
&& \qquad \qquad \qquad \leq \|\left(S_{q-1}(a_{jk}(t,x)-a_{jk}^\eps(t,x)\right)\|_{L^\infty(\R_x^n)} \|\partial_{x_k}v_q\|_{L^2(\R_x^n)} \\
&& \qquad \qquad \qquad \leq C2^q \mu(\eps) \|v_q\|_{L^2(\R_x^n)}
\end{eqnarray*}
and the fact that $\mu^2(\varepsilon)\leq \mu(1) \mu(\varepsilon)$; remark that here the constant $C_2$ depends only on $\mu$ and on $\max_{j,k} \|a_{j,k}\|_{C^\mu([0,T], L^\infty(\R^n_x))}$.

Resuming, we have
\begin{eqnarray}
\nonumber && \int\limits_0^{T/2} \Big\| \partial_t v_q + \sum\limits_{j,k=1}^n \partial_{x_j}\left(\Delta_q (a_{jk}(t,x)\partial_{x_k} v)\right) + \Phi'(\gamma(T-t))v_q \Big\|^2_{L^2(\R^n_x)} dt \\
\nonumber && \quad \geq \int\limits_0^{T/2} \Big\|  \sum\limits_{j,k=1}^n \partial_{x_j}\left(S_{q-1}a_{jk}(t,x)\partial_{x_k}v_q\right) + \Phi'(\gamma(T-t))v_q \Big\|^2_{L^2(\R^n_x)} dt \\
 && \qquad + \gamma\int\limits_0^{T/2} \Phi''(\gamma(T-t))\|v_q\|_{L^2(\R^n_x)}^2 dt - C_3( \frac{\mu(\eps)}{\eps}2^{2q}+2^{4q} \mu(\eps)) \int\limits_0^{T/2} \|v_q\|_{L^2(\R_x^n)}^2 dt, \label{final1} 
\end{eqnarray} where $C_3$ depends only on $\max_{j,k} \|a_{j,k}\|_{C^\mu([0,T], L^\infty(\R^n_x))}$.

\subsection{End of the proof: high frequencies.}

We detail the end of the proof, starting with the high frequencies. We follow the lines of \cite{DSP, DSP2012}. By Remark \ref{Pos} there exist $q_0 \geq -1$ and a constant $C_4>0$ such that 
\begin{eqnarray*}
&& \big\| \sum\limits_{j,k=1}^n \partial_{x_j}\left(S_{q-1}a_{jk}(t,x)\partial_{x_k}v_q\right) \big\|_{L^2(\R^n_x)} \|v_q\|_{L^2(\R^n_x)} \\
&& \qquad \geq  \big| \Big\langle \sum\limits_{j,k=1}^n \partial_{x_j}\left(S_{q-1}a_{jk}(t,x)\partial_{x_k}v_q\right) \, |\, v_q \Big\rangle_{L^2(\R^n_x)}  \big| \geq  \frac{a_0}{2} \|\nabla_x v_q\|_{L^2(\R^n_x)}^2 \geq C_4 a_0 2^{2q}\|v_q\|_{L^2(\R^n_x)}^2,
\end{eqnarray*}
 where $a_0$ is the constant in \eqref{elliptic}.

Suppose first that $\Phi'(\gamma(T-t)) \leq \frac{1}{2}C_4 a_0 2^{2q}$. Then, from the last inequality, we deduce 
\begin{equation*}
\big\| \sum\limits_{j,k=1}^n \partial_{x_j}\left(S_{q-1}a_{jk}(t,x)\partial_{x_k}v_q\right) \big\|_{L^2(\R^n_x)} - \Phi'(\gamma(T-t))\|v_q\|_{L^2(\R^n_x)} \geq \frac{1}{2}C_4 a_0 2^{2q}.
\end{equation*} 
We choose $\eps = 2^{-2q}$ in such a way that the quantities $2^{4q}\mu(\eps)$ and $\displaystyle{2^{2q}\frac{\mu(\eps)}{\eps}}$ are equal.
Using the fact that $\Phi''(\gamma(T-t)) \geq 1$ (this is a consequence of the nonrestrictive hypothesis that $\mu(1)=1$; if it is not so, the modifications of the subsequent lines are easy), we obtain that 
\begin{eqnarray*}
&& \int\limits_0^{T/2} \Big\| \partial_t v_q + \sum\limits_{j,k=1}^n \partial_{x_j}\left(\Delta_q (a_{jk}(t,x)\partial_{x_k} v)\right) + \Phi'(\gamma(T-t))v_q \Big\|^2_{L^2(\R^n_x)} dt \\
&&\quad\geq \int\limits_0^{T/2} \Big( \big\| \sum\limits_{j,k=1}^n \partial_{x_j}\left(S_{q-1}a_{jk}(t,x)\partial_{x_k}v_q\right) \big\|_{L^2(\R^n_x)} - \Phi'(\gamma(T-t))\|v_q\|_{L^2(\R^n_x)} \Big)^2 \\
&& \qquad + \gamma \int\limits_0^{T/2}\Phi''(\gamma(T-t))\|v_q\|_{L^2(\R^n_x)}^2 dt - 2C_3 2^{4q}\mu(2^{-2q})\int\limits_0^{T/2}\|v_q\|_{L^2(\R^n_x)}^2 dt\\
&& \qquad \geq \int\limits_0^{T/2} \Big( (\frac{1}{2}C_4 a_0)^2 2^{4q} + \gamma - 2C_32^{4q}\mu(2^{-2q}) \Big)\|v_q\|_{L^2(\R^n_x)}^2 dt \\
&& \qquad \quad\geq \int\limits_0^{T/2} \Big( \Big( \frac{1}{2}(\frac{1}{2}C_4 a_0)^2 - 2C_3(\mu(2^{-2q}))\Big) 2^{4q} + \frac{\gamma}{3}\Big)  \|v_q\|_{L^2(\R^n_x)}^2 dt \\
&&  \qquad\qquad + \int\limits_0^{T/2} \Big( \frac{1}{2}(\frac{1}{2}C_4 a_0)^2 2^{4q}+ \frac{2}{3}\gamma \Big) \|v_q\|_{L^2(\R^n_x)}^2 dt.
\end{eqnarray*} 
Since we have $\lim_{q \rightarrow +\infty} \mu(2^{-2q}) = 0$, there exists an $\gamma_0>0$ such that \begin{eqnarray*}
\Big( \frac{1}{2}(\frac{1}{2}C_4 a_0)^2 - 2C_3)\mu(2^{-2q}))\Big) 2^{4q} + \frac{\gamma}{3} \geq 0
\end{eqnarray*} for $\gamma \geq \gamma_0$ and all $q \geq q_0$. 
Consequently, for $\gamma \geq \gamma_0$,
\begin{eqnarray*}
&& \int\limits_0^{T/2} \Big\| \partial_t v_q + \sum\limits_{j,k=1}^n \partial_{x_j}\left(\Delta_q (a_{jk}(t,x)\partial_{x_k} v)\right) + \Phi'(\gamma(T-t))v_q \Big\|^2_{L^2(\R^n_x)} dt \\
 &&\qquad\qquad\geq \int\limits_0^{T/2} \Big( \frac{1}{2}(\frac{1}{2}C_4 a_0)^2 2^{4q}+ \frac{2}{3}\gamma \Big) \|v_q\|_{L^2(\R^n_x)}^2 dt.
\end{eqnarray*} 
Recall now \eqref{TechCond2}. Using it with $s=1/2$, we have that three exists $C_0>
0$ such that, for all $q\geq-1$, we have $\mu(2^{-2q})\leq C_02^{-q}$. Then, for all $q\geq-1$ and for all $\gamma \geq \gamma_0$,
\begin{equation*}
 \frac{1}{2}(\frac{1}{2}C_4 a_0)^2 2^{4q}+ \frac{1}{6}\gamma\geq C_5\gamma^{1\over 4}2^{3q}\geq C_6\gamma^{1\over 4}2^{4q}\mu(2^{-2q}),
 \end{equation*}
 for some $C_5, \ C_6>0$. Finally 
\begin{eqnarray}
\nonumber&& \int\limits_0^{T/2} \Big\| \partial_t v_q + \sum\limits_{j,k=1}^n \partial_{x_j}\left(\Delta_q (a_{jk}(t,x)\partial_{x_k} v)\right) + \Phi'(\gamma(T-t))v_q \Big\|^2_{L^2(\R^n_x)} dt \\
 &&\qquad\qquad\geq \int\limits_0^{T/2} \Big( {\gamma\over 2}+C_6\gamma^{1\over 4}2^{4q}\mu(2^{-2q}) \Big) \|v_q\|_{L^2(\R^n_x)}^2 dt.\label{finalest1}
\end{eqnarray} 

Suppose  now $\Phi'(\gamma(T-t)) \geq \frac{1}{2}C_4 a_0 2^{2q}$. Again we choose $\eps = 2^{-2q}$. Then, using \eqref{ODE}, the fact that $a_0 \leq 1$ and the properties of $\mu$, we get
\begin{eqnarray*}
\Phi''(\gamma(T-t)) &=& (\Phi'(\gamma(T-t)))^2 \mu\left( \frac{1}{\Phi'(\gamma(T-t))} \right) \\
&\geq& (\frac{1}{2}C_4 a_0)^2 2^{4q} \mu\left( \frac{2}{C_4 a_0} 2^{-2q} \right) \geq (\frac{1}{2}C_4 a_0)^2 2^{4q} \mu(2^{-2q}).
\end{eqnarray*} 
Hence there exist $\gamma_0$ and constants $C_7, C_8>0$  such that, for $\gamma \geq \gamma_0$, 
\begin{eqnarray}
\nonumber&& \int\limits_0^{T/2} \Big\| \partial_t v_q + \sum\limits_{j,k=1}^n \partial_{x_j}\left(\Delta_q (a_{jk}(t,x)\partial_{x_k} v)\right) + \Phi'(\gamma(T-t))v_q \Big\|^2_{L^2(\R^n_x)} dt \\
\nonumber&&\quad\geq \int\limits_0^{T/2} \Big( \big\| \sum\limits_{j,k=1}^n \partial_{x_j}\left(S_{q-1}a_{jk}(t,x)\partial_{x_k}v_q\right) \big\|_{L^2(\R^n_x)} - \Phi'(\gamma(T-t))\|v_q\|_{L^2(\R^n_x)} \Big)^2 \\
\nonumber&& \qquad + \gamma \int\limits_0^{T/2}\Phi''(\gamma(T-t))\|v_q\|_{L^2(\R^n_x)}^2 dt - 2C_3 2^{4q}\mu(2^{-2q})\int\limits_0^{T/2}\|v_q\|_{L^2(\R^n_x)}^2 dt\\
\nonumber&& \qquad\quad \geq \int\limits_0^{T/2} \Big( \frac{\gamma}{2} + \big(\frac{\gamma}{2}(\frac{1}{2}C_4 a_0)^2-2C_3\big) 2^{4q}\mu(2^{-2q} )\Big)\|v_q\|_{L^2(\R^n_x)}^2 dt \\
\nonumber&& \qquad \qquad \geq \int\limits_0^{T/2} \Big( \frac{\gamma}{2} + C_7  \gamma 2^{4q}\mu(2^{-2q}) \Big)\|v_q\|_{L^2(\R^n_x)}^2 dt  \\
&& \qquad \qquad\quad  \geq \int\limits_0^{T/2} \Big( \frac{\gamma}{2} + C_8  \gamma^{1\over 4} 2^{4q}\mu(2^{-2q}) \Big)\|v_q\|_{L^2(\R^n_x)}^2dt  .\label{finalest2}
\end{eqnarray} 
Recall now that $2^{2q}\mu(2^{-2q})=2^{2q}\omega^2(2^{-q})=\Omega^2(q) $. From \eqref{finalest1} and \eqref{finalest2}
we immediately obtain
\begin{eqnarray}
\nonumber &&\sum\limits_{q \geq q_0} 2^{-2sq} \int\limits_0^{T/2} \Big\| \partial_t v_q + \sum\limits_{j,k=1}^n \partial_{x_j}\left(S_{q-1}a_{jk}(t,x)\partial_{x_k}v_q\right) + \Phi'(\gamma(T-t))v_q \Big\|^2_{L^2(\R^n_x)} dt \\
&&\label{finalissima1}\qquad\geq\sum\limits_{q \geq q_0} 2^{-2sq} \int\limits_0^{T/2} \Big( \frac{\gamma}{2} + C  \gamma^{1\over 4} \Omega^2(q) 2^{2q} \Big)\|v_q\|_{L^2(\R^n_x)}^2 dt.
\end{eqnarray}

\subsection{End of the proof: low frequencies.}

In this section we complete the proof for low frequencies. We sum \eqref{final1} multiplied with $2^{-2qs}$ for $q \leq q_0-1$ ($q_0$ is the same as in the previous section).
We set $\eps = 2^{-2q_0}$ and we obtain 
\begin{eqnarray*}
&& \sum\limits_{q \leq q_0-1} 2^{-2sq} \int\limits_0^{T/2} \Big\| \partial_t v_q + \sum\limits_{j,k=1}^n \partial_{x_j}\left(S_{q-1}a_{jk}(t,x)\partial_{x_k}v_q\right) + \Phi'(\gamma(T-t))v_q \Big\|^2_{L^2(\R^n_x)} dt \\
&& \qquad \geq \sum\limits_{q \leq q_0-1} 2^{-2sq} \int\limits_0^{T/2} \Big( \gamma- 2C_3 \mu(2^{-2q_0}) 2^{2(q+q_0)} \Big) \|v_q\|_{L^2(\R_x^n)}^2 dt.\\
\end{eqnarray*} 
Taking $\gamma_0$ large enough we can absorb the negative term. We easily obtain 
\begin{eqnarray}
&&\nonumber \sum\limits_{q \leq q_0-1} 2^{-2sq} \int\limits_0^{T/2} \Big\| \partial_t v_q + \sum\limits_{j,k=1}^n \partial_{x_j}\left(S_{q-1}a_{jk}(t,x)\partial_{x_k}v_q\right) + \Phi'(\gamma(T-t))v_q \Big\|^2_{L^2(\R^n_x)} dt \\
&&\label{finalissima2} \qquad \geq\sum\limits_{q \leq q_0-1} 2^{-2sq} \int\limits_0^{T/2} \Big( \frac{\gamma}{2} + C  \gamma^{1\over 4} \Omega^2(q) 2^{2q} \Big)\|v_q\|_{L^2(\R^n_x)}^2 dt.
\end{eqnarray} 
Summing \eqref{finalissima1} and \eqref{finalissima2} we obtain \eqref{eq:CarlemanTrans}. The proof is completed.

\subsubsection*{Acknowledgements}

The first author is member of the Gruppo Nazionale per l'Analisi Matematica, la Probabilit\`a
e le loro Applicazioni (GNAMPA) of the Istituto Nazionale di Alta Matematica (INdAM).

A part of this work was done while the third author was visiting the Department of Mathematics and Geosciences of Trieste University with the support of GNAMPA--INdAM.

\vskip2cm
\noindent
Daniele Del Santo\\
Dipartimento di Matematica e Geoscienze\\
Universit\`a di Trieste\\
Via Valerio 12/1, I-34127 Trieste, Italy\\
{\tt delsanto@units.it}

\vskip1cm
\noindent
Christian J\"ah\\
Institut f\"ur Angewandte Analysis\\
Fakult\"at f\"ur Mathematik und Informatik\\
Technische Universit\"at Bergakademie Freiberg\\
Pr\"uferstrasse 9, D-09596 Freiberg, Germany\\
{\tt christian.jaeh@math.tu-freiberg.de}

\vskip1cm
\noindent
Marius Paicu\\
Institut de Math\'ematiques de Bordeaux\\
Universit\'e Bordeaux 1\\
351, cours de la Lib\'eration, F-33405 Talence cedex, France\\
{\tt Marius.Paicu@math.u-bordeaux1.fr}

\end{document}